\numberwithin{equation}{section}
\title{Totally Odd Depth-graded Multiple Zeta Values and Period Polynomials}
\author{Charlotte Dietze}
\address{Charlotte Dietze, Ludwig-Maximilians-Universit\"at M\"unchen, Mathematisches Institut, Theresienstr. 39, 80333 M\"unchen}
\email{dietze@mathematik.uni-muenchen.de}
\author{Chokri Manai}
\address{Chokri Manai, TU M\"unchen, Boltzmannstra\"se 3/III, 85748 Garching, Germany}
\email{chokri.manai@tum.de}
\author{Christian N\"obel}
\address{Christian N\"obel, Department of Mathematics ETH Z\"urich, Institute for Operations Research, Rämistrasse 101, 8092 Z\"urich, Switzerland}
\email{christian.noebel@math.ethz.ch}
\author{Ferdinand Wagner}
\address{Ferdinand Wagner, Max-Planck-Institut f\"ur Mathematik,Vivatsgasse 7,53111 Bonn, Germany}
\email{ferdinand.wagner@uni-bonn.de}
\date{September 2016} 
\subjclass[2010]{Primary 11M32, Secondary 11F67}
\keywords{Multiple zeta values, period polynomials}
\newtheorem{thm}{Theorem}[section]
\newtheorem{lem}[thm]{Lemma}
\newtheorem{cor}[thm]{Corollary}
\newtheorem{con}[thm]{Conjecture}
\theoremstyle{definition}
\newtheorem{dfn}[thm]{Definition}
\theoremstyle{remark}
\newtheorem*{rem}{Remark}
\newcommand{\ihara}{\mathbin{\underline{\circ}}}
\newcommand{\faul}[2]{{\textstyle\binom{#1}{#2}}}
\newcommand{\diag}{\operatorname{diag}}
\newcommand{\rank}{\operatorname{rank}}
\newcommand{\Vect}{\mathsf{Vect}}
\newcommand{\im}{\operatorname{Im}}
\newcommand{\tr}[1]{\prescript{t\!}{}{#1}}
\renewcommand{\;}{\,}
\newcommand{\id}{\operatorname{id}}
\renewcommand{\phi}[2][r]{\varphi^{(#1)}_{#2}}
\newcommand{\phii}[3][r]{\varphi^{(#1)}_{#2}\big|_{\mathbf W_{#3}}}
\newcommand{\odd}{{\operatorname{odd}}}
\begin{document}

\begin{abstract}
We study the relations between totally odd, motivic depth-graded multiple zeta values. Our main objective is to determine the rank of the matrix $C_{N,r}$ defined by Brown \cite{Brown}. We will give conditional proofs for (conjecturally optimal) upper bounds on $\rank C_{N,3}$ and $\rank C_{N,4}$, simplifying some arguments of Tasaka \cite{tasakaPublished}. Finally, we present a recursive approach to the general problem and identify a conjecture which would imply that $\rank C_{N,r}$ has the expected value.
\end{abstract}

\maketitle

\section{Introduction}
\subsection{The Broadhurst--Kreimer conjecture}
In this paper we will study $\mathbb{Q}$-linear relations among totally odd depth-graded multiple zeta values (MZVs), for which there conjecturally is a bijection with the kernel of a specific matrix $C_{N,r}$ connected to restricted even period polynomials (for a definition, see \cite{schneps} or \cite[Section 5]{gkz2006}).

For integers $n_1,\ldots,n_{r-1}\geq1$ and $n_r\geq2$, the MZV of $n_1,\ldots,n_r$ is defined as the number
\begin{equation*}
    \zeta(n_1,\ldots,n_r)\coloneqq \sum_{0<k_1<\cdots<k_r} \frac{1}{k_1^{n_1}\cdots k_r^{n_r}}\;.
\end{equation*}
We call the sum $n_1+\cdots+n_r$ of arguments the weight and their number $r$ the depth of $\zeta(n_1,\ldots,n_r)$. One classical question about MZVs is counting the number of linearly independent $\mathbb Q$-linear relations between MZVs. It is highly expected, but for now seemingly out of reach that there are no relations between MZVs of different weight. Such questions become reachable when considered in the motivic setting. Motivic MZVs $\zeta^{\mathfrak m}(n_1,\ldots,n_r)$ are elements of a certain $\mathbb Q$-algebra $\mathcal H=\bigoplus_{N\geq 0}\mathcal H_N$ which was constructed by Brown in \cite{brownMixedMotives} and is graded by the weight $N$. Any relation fulfilled by motivic MZVs also holds for the corresponding MZVs via the period homomorphism $per\colon\mathcal H\to \mathbb R$. 

We further restrict to depth-graded MZVs: Let $\mathcal Z_{N,r}$ and $\mathcal H_{N,r}$ denote the $\mathbb Q$-vector space spanned by the real respectively motivic MZVs of weight~$N$ and depth~$r$ modulo MZVs of lower depth. The depth-graded MZV of $n_1,\ldots,n_r$, that is, the equivalence class of $\zeta(n_1,\ldots,n_r)$ in $\mathcal Z_{N,r}$, is denoted by $\zeta_{\mathfrak D}(n_1,\ldots,n_r)$. The elements of $\mathcal H_{N,r}$ are denoted $\zeta_{\mathfrak D}^{\mathfrak m}(n_1,\ldots,n_r)$ analogously. The dimension of $\mathcal Z_{N,r}$ is predicted by the Broadhurst--Kreimer Conjecture.

\begin{con}[Broadhurst--Kreimer]
The generating function of the dimension of the space $\mathcal Z_{N,r}$ is given by
\begin{equation*}
    \sum_{N,r\geq0}\dim_{\mathbb Q}\mathcal Z_{N,r}\cdot x^N y^r\overset?= \frac{1- \mathbb E(x)y}{1-\mathbb O(x)y+\mathbb S(x) y^2 - \mathbb S(x)y^4}\;,
\end{equation*}
where we denote $\mathbb E(x)\coloneqq \frac{x^2}{1-x^2}=x^2+x^4+x^6+\cdots$, $\mathbb O(x)\coloneqq \frac{x^3}{1-x^2}=x^3+x^5+x^7+\cdots$, and $\mathbb S(x)\coloneqq \frac{x^{12}}{(1-x^4)(1-x^6)}$.
\end{con}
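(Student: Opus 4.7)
The plan begins with the honest caveat that the Broadhurst--Kreimer conjecture is a major open problem, so what follows is a strategy sketch rather than a complete proof. The first step is to pass to the motivic analogue by replacing $\mathcal Z_{N,r}$ with $\mathcal H_{N,r}$; the resulting motivic Broadhurst--Kreimer conjecture is the version actually accessible through Brown's Hopf-algebraic machinery, and it splits naturally into an upper and a lower bound. Both bounds are controlled by the depth-graded motivic Lie algebra $\mathfrak{dg}$ associated to the motivic fundamental group of $\mathbb{P}^1\setminus\{0,1,\infty\}$; the claimed rational function is precisely the expected Poincar\'e--Hilbert series of (the graded dual of) its universal enveloping algebra.

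For the \emph{upper bound} I would follow Brown's program. Since $\mathcal H_{N,r}$ is a quotient of a free object on generators $\sigma_3,\sigma_5,\sigma_7,\ldots$ (one in each odd weight $\geq 3$, which explains the numerator $1-\mathbb{E}(x)y$), bounding $\dim \mathcal H_{N,r}$ reduces to controlling the codimension of the depth-$r$ relations. These relations are conjecturally captured by the kernel of the matrix $C_{N,r}$ coming from Ihara-bracket antisymmetry together with restricted even period polynomials; the factor $\mathbb{S}(x)y^2-\mathbb{S}(x)y^4$ in the denominator is exactly the Euler-characteristic contribution of cusp-form period polynomials appearing in depths $2$ and $4$. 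Thus the upper bound reduces to computing $\rank C_{N,r}$ for every $r$, which is the main technical target of the paper.

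For the \emph{lower bound} I would exhibit an explicit spanning set using the shuffle, stuffle, and Ihara coproduct structure on motivic MZVs. The key inputs are Zagier's evaluation of the $\zeta(2,\ldots,2,3,2,\ldots,2)$ family, Brown's theorem that the motivic depth filtration is generated in the expected way, and a formal free-Lie-algebra computation that recovers the claimed series once the odd-weight generators $\sigma_{2k+1}$ and the depth-$2$ period-polynomial relations are fed in.

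The hard part, and the reason the conjecture remains open, is twofold. First, one needs an isomorphism conjecture (which the present paper reformulates as a concrete combinatorial statement) to identify the motivic depth-graded piece with the cokernel of $C_{N,r}$. Second, even granting that, one still has to actually evaluate $\rank C_{N,r}$ in every depth; the paper resolves $r=3$ and $r=4$ and reduces the general case to an auxiliary conjecture, but does not settle it. Finally, transferring the result from $\mathcal H_{N,r}$ back to $\mathcal Z_{N,r}$ requires injectivity of the period map on the depth-graded quotient, a transcendence statement well beyond current techniques.
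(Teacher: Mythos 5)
The statement you are addressing is the Broadhurst--Kreimer Conjecture, which the paper states but does not prove---it is a well-known open problem, so there is no internal proof to compare against. You correctly recognize this and offer a strategy sketch rather than a purported proof, which is the right posture.

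One substantive slip in the sketch: you attribute the numerator factor $1-\mathbb E(x)y$ to the odd-weight depth-one generators $\sigma_3,\sigma_5,\sigma_7,\ldots$. Those generators actually account for the $\mathbb O(x)y$ term in the \emph{denominator}; the numerator involves $\mathbb E(x)=x^2+x^4+x^6+\cdots$, which records the even depth-one zeta values $\zeta(2k)$ (essentially the role of $\pi^2$), a contribution that sits outside the ``odd'' Lie-algebra part of the story. This is precisely why, when Brown restricts to the totally odd subspace $\mathcal Z_{N,r}^{\operatorname{odd}}$ in Conjecture~\ref{con:Brown}, the numerator disappears and only $\mathbb O(x)$ and $\mathbb S(x)$ survive. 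Apart from this mislabeling, your description of the $\mathbb S(x)y^2-\mathbb S(x)y^4$ terms as the cusp-form/period-polynomial contribution, and your identification of the two genuine obstructions---first, the isomorphism-type conjecture identifying the depth-graded motivic piece with the cokernel of $C_{N,r}$, and second, the transcendence needed to descend from $\mathcal H_{N,r}$ to $\mathcal Z_{N,r}$---are on target and consistent with how the paper positions what it proves (conditional bounds on $\operatorname{rank} C_{N,3}$ and $\operatorname{rank} C_{N,4}$, plus a recursion under Conjecture~\ref{con:imiso}) and what it leaves open.
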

\begin{rem}
It should be mentioned that $\mathbb S(x)=\sum_{n>0}\dim\mathcal S_n\cdot x^n$, where $\mathcal S_n$ denotes the space of cusp forms of weight~$n$, for which there is an isomorphism to the space of restricted even period polynomials of degree $n-2$ (defined in \cite{schneps} or \cite[Section 5]{gkz2006}).
\end{rem}

\subsection{Brown's matrix $C_{N,r}$}
In his paper \cite{Brown}, Brown considered the $\mathbb Q$-vector space $\mathcal Z_{N,r}^\odd$ (respectively $\mathcal H_{N,r}^\odd$) of totally odd (motivic) and depth-graded MZVs, that is, $\zeta_{\mathfrak D}(n_1,\ldots,n_r)$ (respectively $\zeta_{\mathfrak D}^{\mathfrak m}(n_1,\ldots,n_r)$) for $n_i\geq3$ odd, and linked them to a certain explicit matrix $C_{N,r}$, where $N=n_1+\cdots+n_r$ denotes the weight. In particular, he showed that any right annihilator $(a_{n_1,\ldots,n_r})_{(n_1,\ldots,n_r)\in S_{N,r}}$ of $C_{N,r}$ induces a relation 
\begin{equation*}
    \sum_{(n_1,\ldots,n_r)\in S_{N,r}}a_{n_1,\ldots,n_r}\zeta_{\mathfrak D}^{\mathfrak m}(n_1,\ldots,n_r)=0\;,\text{ hence also }\sum_{(n_1,\ldots,n_r)\in S_{N,r}}a_{n_1,\ldots,n_r}\zeta_{\mathfrak D}(n_1,\ldots,n_r)=0
\end{equation*}(see Section~\ref{sec:preliminaries} for the notations) and conjecturally all relations in $\mathcal Z_{N,r}^\odd$ arise in this way. This led to the following conjecture (the uneven part of the Broadhurst--Kreimer Conjecture).
\begin{con}[Brown {\cite{Brown}}]\label{con:Brown}
The generating series of the dimension of $\mathcal Z_{N,r}^\odd$ and the rank of $C_{N,r}$ are given by
\begin{equation*}
    1+\sum_{N,r>0}\rank C_{N,r}\cdot x^Ny^r\overset?=1+\sum_{N,r>0}\dim_{\mathbb Q}\mathcal Z_{N,r}^\odd\cdot x^Ny^r\overset?=\frac1{1-\mathbb O(x)y+\mathbb S(x)y^2}\;.
\end{equation*}
\end{con}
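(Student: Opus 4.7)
Since the statement is a conjecture rather than an already-proven result, the realistic plan is a roadmap that establishes it in low depth and reduces the general case to a sharper sub-conjecture. I would arrange the three equalities into the chain
\begin{align*}
    \dim_{\mathbb Q}\mathcal Z_{N,r}^\odd \leq \rank C_{N,r} \leq [x^Ny^r]\frac1{1-\mathbb O(x)y+\mathbb S(x)y^2} \leq \dim_{\mathbb Q}\mathcal Z_{N,r}^\odd,
\end{align*}
noting that the leftmost inequality is already supplied by Brown's construction: every vector in the right kernel of $C_{N,r}$ produces a $\mathbb Q$-linear relation among totally odd depth-graded motivic (hence also real) MZVs, so the codimension of relations is at most $\rank C_{N,r}$. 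The task is therefore to prove the two remaining inequalities.

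For the middle inequality, I would expand
\begin{align*}
    \frac1{1-\mathbb O(x)y+\mathbb S(x)y^2} = \sum_{r\geq 0} y^r \sum_{k=0}^{\lfloor r/2\rfloor}(-1)^k \binom{r-k}{k}\mathbb O(x)^{r-2k}\mathbb S(x)^k,
\end{align*}
and read the alternating structure as the Euler characteristic of a natural complex whose generators are ``words'' obtained by inserting restricted even period polynomials (indexed by the factors of $\mathbb S(x)$) into consecutive pairs of positions of an $r$-tuple of odd arguments (indexed by the factors of $\mathbb O(x)$). Concretely, I would construct a map $\varphi$ from this complex into $\ker C_{N,r}$, verify directly from the definition of $C_{N,r}$ that the image lies in the kernel, and compute the dimension of the image via the Euler characteristic. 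This should yield the upper bound $\rank C_{N,r} \leq [x^Ny^r]\frac1{1-\mathbb O(x)y+\mathbb S(x)y^2}$. Tasaka's explicit bounds at $r=3$ and $r=4$ would serve as base cases, which one then tries to subsume into an inductive pattern relating $C_{N,r}$ to lower-depth matrices $C_{N',r-1}$ and $C_{N'',r-2}$.

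For the rightmost inequality one must exhibit enough linearly independent totally odd depth-graded motivic MZVs. I would carry this out in Brown's motivic framework, using the depth-graded coaction on $\mathcal H_{N,r}^\odd$ to detect non-triviality of a candidate basis whose cardinality is the predicted generating function coefficient. Combined with the upper bound on $\rank C_{N,r}$, the chain then collapses to equalities.

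The hardest step, which I expect to be the main obstacle, is proving that $\varphi$ captures all of $\ker C_{N,r}$, i.e.\ that no exotic kernel elements exist beyond those recursively built from period polynomials. This is precisely the \emph{isomorphism conjecture} the abstract refers to, and it forms the combinatorial heart of the uneven Broadhurst--Kreimer conjecture. For small depth it should be accessible by explicit computation with the cusp-form interpretation of $\mathbb S(x)$, but for general $r$ it appears to require genuinely new input about the interaction of cusp forms with the Ihara action on the depth-graded motivic Lie algebra.
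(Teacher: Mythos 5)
Your framing is correct: this is a conjecture, and the realistic deliverable is a conditional proof plus a reduction to a sharper sub-conjecture. You also correctly identify the bottleneck as what the paper calls the \emph{isomorphism conjecture}. However, your proposed route to the upper bound $\rank C_{N,r} \leq [x^Ny^r]\frac{1}{1-\O y + \S y^2}$ --- reading the alternating series as the Euler characteristic of a complex of ``words with inserted period polynomials'' --- is genuinely different from what the paper does. The paper exploits the factorization $C_{N,r} = E_{N,r}^{(2)}\cdots E_{N,r}^{(r-1)}E_{N,r}$ together with the block-diagonal structure of each $E_{N,r}^{(j)}$ (Lemma~\ref{lem:blockdia}), translates the right action of each block into a concrete polynomial operator $\phi j$ on $\mathbf V_{N,r}$ (Lemma~\ref{lem:phij}), and then derives a clean three-term recursion
\begin{align*}
T_r(x)=\O\, T_{r-1}(x)-\S\, T_{r-2}(x)+\O^{r-2}\S
\end{align*}
for $T_r(x)=\sum_N\dim_{\mathbb Q}\ker C_{N,r}\cdot x^N$ (Theorem~\ref{thm:recursion}), assuming Conjecture~\ref{con:imiso} and injectivity of \eqref{eq:TasakasFail}. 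The generating-function identity for $\rank C_{N,r}$ then falls out by a two-line manipulation with $R_r=\O^r-T_r$. The recursion route has the advantage of being entirely explicit at the matrix/polynomial level, isolating exactly one intersection problem per depth step --- namely $\ker\phi r\cap\im(\phi{r-1}\circ\cdots\circ\phi 2)$ --- rather than requiring an entire complex to be exact. Your Euler-characteristic picture packages the same $(-1)^k\binom{r-k}{k}$ combinatorics, but you would still need to prove that each stage of the complex is acyclic, which is essentially the isomorphism conjecture again; so neither approach escapes that obstacle, but the paper's recursion pins down precisely what has to be an isomorphism.

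One further caveat: your rightmost inequality $[x^Ny^r]\frac{1}{1-\O y+\S y^2}\leq\dim_{\mathbb Q}\mathcal Z_{N,r}^\odd$ is not touched by the paper at all. The paper's conditional result only establishes the leftmost equality (the rank identity), and via Brown's construction yields an \emph{upper} bound on $\dim_{\mathbb Q}\mathcal H_{N,r}^\odd$; descending from $\mathcal H_{N,r}^\odd$ to $\mathcal Z_{N,r}^\odd$ is a transcendence question far out of reach, and ``using the coaction to detect a candidate basis'' would at best prove lower bounds on the motivic space, not the real one. You should separate the motivic statement (where your plan could in principle close the circle) from the statement about $\mathcal Z_{N,r}^\odd$, which is strictly stronger.
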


\subsection{Summary of this paper}
The contents of this paper are as follows. In Section~\ref{sec:preliminaries}, we explain our notations and define the matrices $C_{N,r}$ due to Brown \cite{Brown} as well as $E_{N,r}$ and $E_{N,r}^{(j)}$ considered by Tasaka \cite{tasakaPublished}. In Section~\ref{sec:known results}, we briefly state some of Tasaka's results on the matrix $E_{N,r}$. Section~\ref{sec:main tools} is devoted to further investigate the connection between the left kernel of $E_{N,r}$ and restricted even period polynomials, which was first discovered by Baumard and Schneps \cite{schneps} and appears again in \cite[Theorem 3.6]{tasakaPublished}. In Section~\ref{sec:main results}, we will apply our methods to the cases $r=3$, $r=4$, and $r=5$. The first goal of Section~\ref{sec:main results} will be to show
\begin{thm}\label{thm:case3}\label{thm:case4}Assume that the map from Theorem~\ref{thm:injection} below is injective. We then have the following lower bounds (which Conjecture~\ref{con:Brown} predicts to be sharp):
\begin{align*}
    \sum_{N>0}\dim_{\mathbb Q}\ker C_{N,3}\cdot x^N&\geq 2 \mathbb O(x)\mathbb S(x)\;,\\
    \sum_{N>0}\dim_{\mathbb Q}\ker C_{N,4}\cdot x^N&\geq 3 \mathbb O(x)^2\mathbb S(x)-\mathbb S(x)^2\;.
\end{align*}
Here $\geq$ means that for every $N>0$ the coefficient of $x^N$ on the right-hand side does not exceed the corresponding one on the left-hand side.
\end{thm}
\begin{thm}\label{thm:case5}Assume that the map from Theorem~\ref{thm:injection} below is an isomorphism. We then have the following lower bound (which Conjecture~\ref{con:Brown} predicts to be sharp):
	\begin{equation*}
		\sum_{N>0}\dim_{\mathbb Q}\ker C_{N,5}\cdot x^N\geq 4 \mathbb O(x)^3\mathbb S(x)-3 \mathbb O(x)\mathbb S(x)^2\;.
	\end{equation*}
\end{thm}
\begin{rem}
	Theorem~\ref{thm:case3} was first announced in \cite{tasakaPublished}, without the condition that the map from Theorem~\ref{thm:injection} is injective. It has since become apparent that the original proof of injectivity given in \cite{tasakaPublished} contained a gap \cite{tasakaCorrigendum}. Assuming injectivity, our proof of Theorem~\ref{thm:case3} is a significant simplification of the arguments in \cite{tasakaPublished} since we only use tools from linear algebra and make no mention of the shuffle product.
\end{rem}
\begin{rem}
	After the first version of this paper had appeared, Tasaka \cite{tasakaFullProof} has given a full and unconditional proof of the upper bound
	\begin{equation}\label{eq:tabdnew}
		1+\sum_{N,r>0}\rank C_{N,r}\cdot x^Ny^r\leq \frac1{1-\mathbb O(x)y+\mathbb S(x)y^2}\,,
	\end{equation}
	which is predicted to be sharp by Conjecture~\ref{con:Brown}. In particular, this gives uncoditional proofs of Theorems~\ref{thm:case3} and~\ref{thm:case5}. However, the injectivity of the map in Theorem~\ref{thm:injection} is still open and is not implied by the unconditional result.
\end{rem}
In the last subsection, we present a recursive approach to determine the precise value of $\dim_{\mathbb Q}\ker C_{N,r}$. We identify and isomorphism conjecture due to Claire Glanois, see Conjecture~\ref{con:imiso}, which allows us to deduce the values for the values for $\dim_{\mathbb Q}\ker C_{N,r}$, as stated in Conjecture~\ref{con:Brown}.
\begin{thm}\label{thm:caser}
	If the map from Theorem~\ref{thm:injection} below  is injective and Conjecture~\ref{con:imiso} is true, then
	\begin{equation*}
		1+\sum_{N,r>0}\rank C_{N,r}\cdot x^Ny^r=\frac1{1-\mathbb O(x)y+\mathbb S(x)y^2}\,.	\end{equation*}
\end{thm}
In view of the upper bound \eqref{eq:tabdnew}, which was proved by Tasaka in \cite{tasakaFullProof}, we hope that our conditional Theorem~\ref{thm:caser} provides some progress towards obtaining the exact conjectured values using a recursive approach.

\subsection*{Acknowledgments}
This research was conducted as part of the Hospitanzprogramm (internship program) at the Max-Planck-Institut f\"ur Mathematik (Bonn). We are grateful for the invitation and the hospitality. We would like to express our deepest thanks to our mentor, Claire Glanois, for introducing us into the theory of multiple zeta values. Finally, we would like to thank Daniel Harrer, Matthias Paulsen, and J\"orn St\"ohler for many helpful comments.

\section{Preliminaries}\label{sec:preliminaries}
\subsection{Notations}
In this section we introduce our notations and we give some definitions. As usual, for a matrix $A$ we define $\ker A$ to be the set of right annihilators of $A$. Apart from this, we mostly follow the notations of Tasaka in his paper \cite{tasakaPublished}. Let
\begin{equation*}
    S_{N,r}\coloneqq \left\{(n_1,\ldots,n_r)\in\mathbb Z^r\ |\ n_1+\cdots+n_r=N,\ n_1,\ldots,n_r\geq3\text{ odd}\right\}\;,
\end{equation*}
where $N$ and $r$ are natural numbers. Since the elements of the set $S_{N,r}$ will be used as indices of matrices and vectors, we usually arrange them in lexicographically decreasing order. Let
\begin{equation*}
\mathbf V_{N,r}\coloneqq \left\langle \left.x_1^{m_1-1}\cdots x_r^{m_r-1}\ \right|\ (m_1,\ldots,m_r)\in S_{N,r}\right\rangle_\mathbb{Q}
\end{equation*}
denote the vector space of restricted totally even homogeneous polynomials of degree $N-r$ in $r$ variables. There is a natural isomorphism from $\mathbf V_{N,r}$ to the $\mathbb Q$-vector space $\Vect_{N,r}$ of $n$-tuples $(a_{n_1,\ldots,n_r})_{(n_1,\ldots,n_r)\in S_{N,r}}$ indexed by totally odd indices $(n_1,\ldots,n_r)\in S_{N,r}$, which we denote
\begin{align}\label{eq:natiso}
\begin{split}
    \pi\colon\mathbf V_{N,r}&\overset{\sim\,}{\longrightarrow}\Vect_{N,r}\\
    \sum_{(n_1,\ldots,n_r)\in S_{N,r}}a_{n_1,\ldots,n_r}x_1^{n_1-1}\cdots x_r^{n_r-1}&\longmapsto \left(a_{n_1,\ldots,n_r}\right)_{(n_1,\ldots,n_r)\in S_{N,r}}\;.
\end{split}    
\end{align}
We assume vectors to be row vectors by default.

Finally, let $\mathbf W_{N,r}$ be the vector subspace of $\mathbf V_{N,r}$ defined by
\begin{align*}
\mathbf W_{N,r}\coloneqq \left\{P\in\mathbf V_{N,r}\ |\ P(x_1,\ldots,x_r)\right.&=P(x_2-x_1,x_2,x_3,\ldots,x_r)\\&\left.\phantom=-P(x_2-x_1,x_1,x_3,\ldots,x_r)\right\}\;.
\end{align*}
That is, $P(x_1,x_2,x_3,\ldots,x_r)$ is a sum of restricted even period polynomials in $x_1,x_2$ multiplied by monomials in $x_3,\ldots,x_r$. More precisely, one can decompose
\begin{align}
    \mathbf W_{N,r}=\bigoplus_{\substack{1<n<N\\n\text{ even}}}\mathbf W_{n,2}\otimes \mathbf V_{N-n,r-2} \label{eq:decomposition}\;,
\end{align}
where $\mathbf W_{n,2}$ is the space of restricted even period polynomials of degree $n-2$. Since $\mathbf W_{n,2}$ is isomorphic to the space $\mathcal S_n$ of cusp forms of weight~$n$ by Eichler-Shimura correspondence (see \cite{zagier}), \eqref{eq:decomposition} leads to the following dimension formula.
\begin{lem}[Tasaka {\cite[equation 3.10]{tasakaPublished}}]\label{lem:wnreq}
For all $r\geq 2$,
\begin{equation*}
    \sum_{N>0}\dim_{\mathbb Q}\mathbf W_{N,r}\cdot x^N=  \mathbb O(x)^{r-2} \mathbb S(x)\;.
\end{equation*}
\end{lem}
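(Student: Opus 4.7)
The plan is to pass to generating functions in the direct-sum decomposition~\eqref{eq:decomposition}. Since it is a direct sum of tensor products, taking dimensions yields
\[
\dim_{\mathbb Q}\mathbf W_{N,r} = \sum_{\substack{1<n<N\\ n\text{ even}}} \dim_{\mathbb Q}\mathbf W_{n,2} \cdot \dim_{\mathbb Q}\mathbf V_{N-n,r-2},
\]
which is a Cauchy convolution. Hence $\sum_{N>0} \dim_{\mathbb Q}\mathbf W_{N,r}\cdot x^N$ factors as a product of the generating functions for $\dim_{\mathbb Q}\mathbf W_{n,2}$ and $\dim_{\mathbb Q}\mathbf V_{m,r-2}$, and it will suffice to identify each factor.

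For the first factor, the Eichler-Shimura correspondence alluded to in the preliminaries identifies $\mathbf W_{n,2}$ with the space $\mathcal S_n$ of cusp forms of weight~$n$, so by the remark following the Broadhurst-Kreimer Conjecture one has $\sum_n \dim_{\mathbb Q}\mathbf W_{n,2}\cdot x^n = \mathbb S(x)$. The parity and range restrictions $n$ even and $1<n<N$ in~\eqref{eq:decomposition} cause no loss, since $\mathcal S_n$ vanishes for $n$ odd or $n<12$, and $\mathbf V_{0,0}$ is to be interpreted as the ground field (handling the $r=2$ base case).

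For the second factor, $\dim_{\mathbb Q}\mathbf V_{m,s} = |S_{m,s}|$ counts ordered compositions of $m$ into $s$ odd parts, each at least $3$. Since the single-variable generating function for one such part is exactly $x^3+x^5+x^7+\cdots = \mathbb O(x)$, the generating function for $s$ ordered parts is $\mathbb O(x)^s$ by the product rule. Setting $s=r-2$ and multiplying the two factors gives the claimed identity. The main conceptual inputs---the decomposition~\eqref{eq:decomposition} and the Eichler-Shimura isomorphism---are already in hand, so there is essentially no obstacle beyond this bookkeeping.
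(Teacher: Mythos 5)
Your argument is correct and follows exactly the route the paper sketches: the paper states the decomposition~\eqref{eq:decomposition}, invokes the Eichler--Shimura identification $\mathbf W_{n,2}\cong\mathcal S_n$, and asserts that this "leads to" the dimension formula, attributing it to Tasaka rather than giving details. You have simply spelled out the bookkeeping: the direct sum of tensor products turns the dimension count into a Cauchy convolution, $\sum_n\dim\mathbf W_{n,2}\cdot x^n=\mathbb S(x)$ by Eichler--Shimura, and $\sum_m\dim\mathbf V_{m,s}\cdot x^m=\mathbb O(x)^s$ since $\dim\mathbf V_{m,s}=|S_{m,s}|$ counts compositions of $m$ into $s$ odd parts $\geq3$. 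The only point worth noting is the $r=2$ edge case, where the strict bound $n<N$ in the displayed decomposition would formally give $0$; your interpretation of $\mathbf V_{0,0}$ as the ground field (and reading the decomposition with $n\leq N$, or treating $r=2$ directly as Eichler--Shimura) resolves this, and is the intended reading.
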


\subsection{Ihara action and the matrices $E_{N,r}$ and $C_{N,r}$}
We use Tasaka's notation (from \cite{tasakaPublished}) for the polynomial representation of the Ihara action defined by Brown \cite[Section 6]{Brown}. Let
\begin{align*}
   \ihara\colon\mathbb Q[x_1]\otimes\mathbb Q[x_2,\ldots,x_r]&\longrightarrow\mathbb Q[x_1,\ldots,x_r] \\
   f\otimes g&\longmapsto f\ihara g\;,
\end{align*}
where $f\ihara g$ denotes the polynomial
\begin{multline*}
    (f\ihara g)(x_1,\ldots,x_r)\coloneqq f(x_1)g(x_2,\ldots,x_r)+\sum_{i=1}^{r-1}\Bigl(f(x_{i+1}-x_i)g(x_1,\ldots,\hat x_{i+1},\ldots,x_r)\\
   -(-1)^{\deg f}f(x_i-x_{i+1})g(x_1,\ldots,\hat x_i,\ldots,x_r)\Bigr)\;.
\end{multline*} 
(the hats are to indicate, that $x_{i+1}$ and $x_i$ resp. are omitted in the above expression).

For integers 
$m_1,\ldots,m_r,n_1,\cdots,n_r \geq 1$, let furthermore the integer $e\faul{m_1,\ldots,m_r}{n_1,\ldots,n_r}$ denote the coefficient of $x_1^{n_1-1}\cdots x_r^{n_r-1}$ in $x_1^{m_1-1} \ihara \left(x_1^{m_2-1}\cdots x_{r-1}^{m_r-1}\right)$, i.\,e.
\begin{align}\label{eq:e}
    x_1^{m_1-1} \ihara \left(x_1^{m_2-1}\cdots x_{r-1}^{m_r-1}\right) = \sum_{\substack{n_1 + \cdots + n_r = m_1 + \cdots + m_r \\ n_1, \cdots, n_r \geq 1}} e\faul{m_1,\ldots,m_r}{n_1,\ldots,n_r}x_1^{n_1-1}\cdots x_r^{n_r-1}\;.
\end{align}
Note that $e\faul{m_1,\ldots,m_r}{n_1,\ldots,n_r}=0$ if $m_1+\cdots+m_r\not=n_1+\cdots+n_r$.
\begin{rem}
One can explicitly compute the integers $e\faul{m_1,\ldots,m_r}{n_1,\cdots,n_r}$ by the following formula: (\cite[Lemma 3.1]{tasakaPublished})
\begin{multline*}
    e\faul{m_1,\ldots,m_r}{n_1,\ldots,n_r}=\delta\faul{m_1,\ldots,m_r}{n_1\ldots,n_r}+\sum_{i=1}^{r-1}\delta\faul{\hat m_1,m_2,\ldots,m_i,\hat m_{i+1},m_{i+2},\ldots,m_r}{n_1,\ldots,n_{i-1},\hat n_i,\hat n_{i+1},n_{i+2},\ldots,n_r}\\
    \cdot\left((-1)^{n_i}\binom{m_1-1}{n_i-1}+(-1)^{m_1-n_{i+1}}\binom{m_1-1}{n_{i+1}-1}\right)
\end{multline*}
(again, the hats are to indicate that $m_1,m_{i+1},n_i,n_{i+1}$ are omitted), where
\begin{equation*}
    \delta\faul{m_1,\ldots,m_{s}}{n_1,\ldots,n_{s}} \coloneqq  
    \begin{cases}
        1\quad\text{if } m_i=n_i \text{ for all }i\in\{1,\ldots,s\} \\
        0\quad\text{else}
    \end{cases}
\end{equation*}
denotes the usual Kronecker delta.
\end{rem}
\begin{dfn}\label{def:enrq}
Let $N,r$ be positive integers.
\begin{itemize}
\item[$(\operatorname{i})$]We define the $|S_{N,r}|\times |S_{N,r}|$ matrix
\begin{equation*}
    E_{N,r}\coloneqq \left(e\faul{m_1,\ldots,m_r}{n_1,\ldots,n_r}\right)_{(m_1,\ldots,m_r),(n_1,\ldots,n_r)\in S_{N,r}}\;.
\end{equation*}
\item[$(\operatorname{ii})$]For integers $r\geq j\geq 2$ we also define the $|S_{N,r}|\times |S_{N,r}|$ matrix
\begin{equation*}
    E_{N,r}^{(j)}\coloneqq \left(\delta\faul{m_1,\ldots,m_{r-j}}{n_1,\ldots,n_{r-j}}e\faul{m_{r-j+1},\ldots,m_r}{n_{r-j+1},\ldots,n_r} \right)_{(m_1,\ldots,m_r),(n_1,\ldots,n_r)\in S_{N,r}}\;.
\end{equation*}
\end{itemize}
\end{dfn}
\begin{dfn}[{\cite[Definition 2.3 and Proposition 3.3]{tasakaPublished}}]\label{def:cnr}
    The $|S_{N,r}|\times |S_{N,r}|$ matrix $C_{N,r}$ is defined as
    \begin{equation*}
        C_{N,r}\coloneqq E_{N,r}^{(2)}\cdot E_{N,r}^{(3)}\cdots E_{N,r}^{(r-1)}\cdot E_{N,r}\;.
    \end{equation*}
\end{dfn}

\section{Known Results}\label{sec:known results}

Recall the map $\pi\colon\mathbf V_{N,r}\rightarrow\Vect_{N,r}$ (equation~\eqref{eq:natiso}). Theorem~\ref{thm:Schneps} due to Baumard and Schneps \cite{schneps} establishes a connection between the left kernel of the matrix $E_{N,2}$ and the space $\mathbf W_{N,2}$ of restricted even period polynomials. This connection was further investigated by Tasaka \cite{tasakaPublished}, relating $\mathbf W_{N,r}$ and the left kernel of $E_{N,r}$ for arbitrary $r\geq2$.
\begin{thm}[Baumard-Schneps {\cite[Proposition 3.2]{schneps}}]\label{thm:Schneps}
For each integer $N>0$ we have
\begin{equation*}
    \pi\left(\mathbf W_{N,2}\right)=\ker\tr E_{N,2}\;.
\end{equation*}
\end{thm}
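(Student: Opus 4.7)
The plan is to translate the kernel condition into a polynomial identity and then analyse it by the parities of the exponents. Using the explicit formula for $\ihara$ and the evenness of $m_1-1$ for $(m_1,m_2)\in S_{N,2}$ (so that $(x_1-x_2)^{m_1-1}=(x_2-x_1)^{m_1-1}$), one computes, for $P=\sum a_{m_1,m_2}x_1^{m_1-1}x_2^{m_2-1}\in\mathbf V_{N,2}$,
$$\sum_{(m_1,m_2)\in S_{N,2}}a_{m_1,m_2}\bigl(x_1^{m_1-1}\ihara x_1^{m_2-1}\bigr)=Q(x_1,x_2),$$
where $Q(x_1,x_2):=P(x_1,x_2)+P(x_2-x_1,x_1)-P(x_2-x_1,x_2)$. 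Hence $\pi(P)\in\ker\tr E_{N,2}$ is equivalent to the vanishing of the coefficient of $x_1^{n_1-1}x_2^{n_2-1}$ in $Q$ for every $(n_1,n_2)\in S_{N,2}$, while $P\in\mathbf W_{N,2}$ is, by definition, $Q\equiv 0$. The inclusion $\pi(\mathbf W_{N,2})\subseteq\ker\tr E_{N,2}$ is then immediate.

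For the reverse inclusion I would decompose $Q=Q^{ee}+Q^{eo}+Q^{oe}+Q^{oo}$ according to the parities of the exponents of $x_1$ and $x_2$. Since $P$ is totally even, the expression $(x_2-x_1)^{m_1-1}+(x_2+x_1)^{m_1-1}$ contains only monomials with both exponents even, and evaluating $Q(x_1,x_2)+Q(-x_1,x_2)$ then forces $Q^{eo}=0$; symmetrically, $Q^{oe}=0$. A direct computation gives $Q(x_1,0)=P(1,1)\,x_1^{N-2}$ and $Q(0,x_2)=-P(1,1)\,x_2^{N-2}$, so the two boundary coefficients of $Q^{ee}$ equal $\pm P(1,1)$; summing the assumed interior vanishing relations over the swap-symmetric set $S_{N,2}$ then yields $P(1,1)=0$, whence $Q^{ee}=0$ and $Q=Q^{oo}$.

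It remains to show $Q^{oo}\equiv 0$. The identity $Q(x_1,x_2)+Q(x_2,x_1)=P(x_1,x_2)+P(x_2,x_1)$, verified by direct substitution, equates a totally-odd polynomial to a totally-even one, so both vanish; in particular $P$ is antisymmetric under the swap. An analogous computation yields $Q(x_1,x_2)+Q(x_2-x_1,x_2)=P(x_2-x_1,x_1)+P(x_1,x_2-x_1)$, which now vanishes by the antisymmetry of $P$, giving $Q^{oo}(x_2-x_1,x_2)=-Q^{oo}(x_1,x_2)$. Since $Q^{oo}$ has only odd exponents of $x_1$, this rewrites as $R(u+v,v)=R(u,v)$ for $R:=Q^{oo}$. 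Substituting $u=tv$, the one-variable polynomial $f(t):=R(tv,v)/v^{N-2}$ satisfies $f(t+1)=f(t)$ and is therefore constant; but $R$ has only positive powers of its first variable, forcing $R=0$.

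The main obstacle is the last paragraph: the direct substitutions into $Q$ produce long polynomial expressions, so one must carefully manage the parity bookkeeping in order to extract the antisymmetry of $P$ and the correct functional equation for $Q^{oo}$. Once these are in place, the concluding polynomial-identity argument reducing $R$ to a constant in its first variable is routine.
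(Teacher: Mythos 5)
The paper cites \cite[Proposition~3.2]{schneps} without reproducing the proof, so there is no in-paper argument to compare against; I assess your attempt on its own merits. Your reduction to the polynomial $Q(x_1,x_2)=P(x_1,x_2)+P(x_2-x_1,x_1)-P(x_2-x_1,x_2)$, the parity decomposition $Q=Q^{ee}+Q^{eo}+Q^{oe}+Q^{oo}$, the elimination of $Q^{eo}$ and $Q^{oe}$, the boundary computations $Q(x_1,0)=P(1,1)x_1^{N-2}$ and $Q(0,x_2)=-P(1,1)x_2^{N-2}$, the symmetrisation identity $Q(x_1,x_2)+Q(x_2,x_1)=P(x_1,x_2)+P(x_2,x_1)$, the second functional identity, and the final periodicity argument killing $Q^{oo}$ are all correct.

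The one genuine gap is the assertion that ``summing the assumed interior vanishing relations over the swap-symmetric set $S_{N,2}$ then yields $P(1,1)=0$.'' The interior relations state that certain coefficients of $Q$ vanish; summing zeros provides no information about the two boundary coefficients $\pm P(1,1)$ of $Q^{ee}$, which lie outside $S_{N,2}$, so as written the step is unjustified. The fact is nevertheless true, but it must be derived \emph{after} the antisymmetry of $P$, not before. The fix is a reordering: since the interior of $Q^{ee}$ vanishes by hypothesis, one already knows $Q^{ee}(x_1,x_2)=P(1,1)\bigl(x_1^{N-2}-x_2^{N-2}\bigr)$, which is itself antisymmetric under $x_1\leftrightarrow x_2$, so its contribution to $Q(x_1,x_2)+Q(x_2,x_1)$ cancels regardless of the value of $P(1,1)$. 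Your parity argument then yields $P(x_1,x_2)+P(x_2,x_1)=0$ unconditionally, from which $P(1,1)=-P(1,1)=0$ and hence $Q^{ee}=0$ follow immediately. With the ``summing'' sentence deleted and the antisymmetry of $P$ moved ahead of the conclusion $P(1,1)=0$, the proof is complete.
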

\begin{thm}[Tasaka {\cite[Theorem 3.6]{tasakaPublished}}, {\cite{tasakaCorrigendum}}]\label{thm:injection}
    Let $r\geq2$ be a positive integer and $F_{N,r}=E_{N,r}-\id_{\Vect_{N,r}}$. Then, the following $\mathbb Q$-linear map is well-defined:
    \begin{align}\label{eq:TasakasFail}
    \begin{split}
        \mathbf W_{N,r}&\longrightarrow \ker\tr E_{N,r}\\
        P(x_1,\ldots,x_r)&\longmapsto\pi(P)F_{N,r}\;.
    \end{split}
    \end{align}
\end{thm}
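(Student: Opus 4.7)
To show that the map is well-defined, we must verify that for every $P\in\mathbf W_{N,r}$ the row vector $\pi(P)F_{N,r}$ lies in $\ker\tr E_{N,r}$, i.e.\
\[\pi(P)F_{N,r}E_{N,r}=0.\]

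\textbf{Step 1 (polynomial reformulation).} The plan is to translate the matrix identity into a polynomial identity. Extending the rule $x_1^{m_1-1}\cdots x_r^{m_r-1}\mapsto x_1^{m_1-1}\ihara(x_1^{m_2-1}\cdots x_{r-1}^{m_r-1})$ linearly defines an endomorphism $\rho$ of $\mathbb Q[x_1,\ldots,x_r]_{N-r}$. Let $\pi_{\odd}$ denote the projection onto totally-odd monomials (each exponent even and at least $2$), and set $\sigma\coloneqq\pi_{\odd}\circ\rho|_{\mathbf V_{N,r}}$. Then $\pi(Q)E_{N,r}=\pi(\sigma(Q))$ for every $Q\in\mathbf V_{N,r}$, so the target identity is equivalent to the polynomial statement
\[\sigma^2(P)=\sigma(P)\qquad\text{for every }P\in\mathbf W_{N,r}.\]

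\textbf{Step 2 (first Ihara cancellation).} By the decomposition \eqref{eq:decomposition} of $\mathbf W_{N,r}$, it suffices to treat $P=p(x_1,x_2)\,q(x_3,\ldots,x_r)$ with $p\in\mathbf W_{n,2}$ and $q\in\mathbf V_{N-n,r-2}$. Writing $p=\sum_\iota f_\iota(x_1)h_\iota(x_2)$ as a sum of monomials of even degree, the Ihara formula expands
\[\rho(P)=P+\sum_{j=1}^{r-1}T_j(P).\]
A direct calculation using $f_\iota(-u)=f_\iota(u)$ and the defining relation $p(x_1,x_2)=p(x_2-x_1,x_2)-p(x_2-x_1,x_1)$ of $\mathbf W_{n,2}$ yields $T_1(P)=-P$. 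Combining the remaining summands via $\sum_\iota h_\iota(a)f_\iota(b)=p(b,a)$ gives the compact expression
\[\rho(P)=\sum_{j=2}^{r-1}p(x_{j+1}-x_j,\,x_1)\bigl[q(x_2,\ldots,\hat x_{j+1},\ldots,x_r)-q(x_2,\ldots,\hat x_j,\ldots,x_r)\bigr].\]

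\textbf{Step 3 (second iteration).} It remains to establish $\sigma^2(P)=\sigma(P)$. The approach is to apply $\rho$ once more to $\sigma(P)$ and exhibit, after the projection $\pi_{\odd}$, a cancellation analogous to $T_1(P)=-P$ in Step~2. The crucial observation is that each summand of $\rho(P)$ has the structure $p(x_{j+1}-x_j,x_1)\cdot Q_j(x_2,\ldots,x_r)$, in which the role previously played by $(x_1,x_2)$ in the period relation is now played by the pair $(x_{j+1}-x_j,x_1)$. Invoking the same defining identity of $\mathbf W_{n,2}$ in this shifted Ihara slot produces a matching cancellation, and the surviving terms, after restriction to totally-odd monomials, recombine into $\sigma(P)$.

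\textbf{Main obstacle.} Step~3 is the heart of the argument. The second Ihara expansion produces a double sum indexed by pairs of Ihara positions, and one must verify that every pair either contributes zero or recombines into a term of $\sigma(P)$. This requires both the period relation of $p$ applied in a shifted Ihara slot, and a parity analysis ensuring that the non-totally-odd intermediate monomials produced in the first application of $\rho$ do not leak back into $\sigma^2(P)$ after the second application of $\pi_{\odd}$.
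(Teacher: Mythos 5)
Your proposal is \emph{incomplete}, and the gap it leaves is precisely the crux of the theorem. A preliminary remark: the paper does not actually prove this statement; it is cited directly from Tasaka's Theorem~3.6, so there is no ``paper's own proof'' to compare against. However, the computation in your Step~2 is essentially the same calculation that appears in the paper's Lemma~\ref{lem:fnr} (which, in the paper's notation, establishes $\pi(-P)E_{N,r}^{(r-1)}=\pi(P)F_{N,r}$, i.e., that the $i=1$ summand of the Ihara expansion cancels $P$ via the period relation). That part is correct and you rederive it cleanly.

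The problem is Step~3, which you yourself label the ``Main obstacle.'' Your reformulation says one must prove $\sigma^2(P)=\sigma(P)$, which unwinds to: writing $\rho(Q)=Q+\sum_{i=1}^{r-1}T_i(Q)$, the requirement is $\pi_{\odd}\bigl(\sum_{i=1}^{r-1}T_i(\sigma(P))\bigr)=0$. This is a genuinely different statement from the cancellation $T_1(P)=-P$ in Step~2, because (a) $\sigma(P)\in\mathbf V_{N,r}$ is \emph{not} an element of $\mathbf W_{N,r}$ --- the period relation in the slot $(x_1,x_2)$ simply does not hold for $\sigma(P)$, so $T_1(\sigma(P))\neq-\sigma(P)$ in general --- and (b) after the projection $\pi_{\odd}$, the factored structure $p(x_{j+1}-x_j,x_1)\cdot Q_j(x_2,\ldots,x_r)$ that you rely on has been dissolved into a sum of standard monomials. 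The Ihara action applied to $\sigma(P)$ sees only that monomial expansion, not the $p$-structure, so there is no ``shifted Ihara slot'' in which the period relation of $p$ can be invoked directly. The parity/leakage issue you flag at the end compounds this: terms of $\rho(P)$ discarded by the first $\pi_{\odd}$ would, if they survived, interact nontrivially with the second application of $\rho$, and you have given no mechanism for why those interactions vanish. As written, Step~3 is a plausible-sounding sketch of a cancellation that has not been shown to exist; it is not a proof, and the heuristic you offer does not straightforwardly produce one.
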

\begin{con}[Tasaka {\cite[Section 3.3]{tasakaPublished}}]\label{con:isomorphism}For all $r\geq2$, the map described in Theorem~\ref{thm:injection} is an isomorphism.
\end{con}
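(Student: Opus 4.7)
The plan is to establish the two components of the conjectured isomorphism --- injectivity and surjectivity of the map $P \mapsto \pi(P)F_{N,r}$ --- separately, with an inductive strategy over the depth $r$.

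For injectivity, I would induct on $r$. The base case $r=2$ would follow from Theorem~\ref{thm:Schneps}, together with a short check that $F_{N,2}$ acts injectively on $\pi(\mathbf W_{N,2})$: on that subspace one has $\pi(P)E_{N,2}=0$, so $\pi(P)F_{N,2}=-\pi(P)$, and injectivity reduces to the non-degeneracy of $\pi$. For the inductive step, I would exploit the decomposition $\mathbf W_{N,r}=\bigoplus_{n}\mathbf W_{n,2}\otimes \mathbf V_{N-n,r-2}$ from~\eqref{eq:decomposition}. Writing $P=\sum_{j}Q_j(x_1,x_2)\,R_j(x_3,\ldots,x_r)$ with $Q_j\in\mathbf W_{n_j,2}$, one analyzes the Ihara action $\ihara$ separately on the blocks of variables $(x_1,x_2)$ versus $(x_3,\ldots,x_r)$, using the explicit formula for $e\faul{m_1,\ldots,m_r}{n_1,\ldots,n_r}$. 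The hope is that if $\pi(P)F_{N,r}=0$, then a lexicographic or leading-term argument on the $R_j$ reduces the vanishing to the $r=2$ case applied to each individual $Q_j$.

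For surjectivity, the cleanest route would be a dimension count: by Lemma~\ref{lem:wnreq} one has $\sum_{N}\dim_{\mathbb Q}\mathbf W_{N,r}\,x^N=\mathbb O(x)^{r-2}\mathbb S(x)$, which one would match against the corank of $E_{N,r}$. However, computing $\rank E_{N,r}$ unconditionally is essentially the question at hand, so this approach is circular. A non-circular alternative is to exhibit an explicit splitting: for each $v\in\ker\tr E_{N,r}$, construct a canonical polynomial $P_v\in\mathbf W_{N,r}$ with $\pi(P_v)F_{N,r}=v$. Since on $\ker\tr E_{N,r}$ multiplication by $F_{N,r}$ acts as $-\id$, one might try to invert $E_{N,r}$ on a carefully chosen complement of its left kernel and pull $v$ back to $\mathbf W_{N,r}$ via the Ihara-action structure, using the block decomposition of $\mathbf W_{N,r}$ to produce the period-polynomial component in the first two variables.

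The main obstacle will be surjectivity. The map~\eqref{eq:TasakasFail} is neither a projection nor manifestly built from an idempotent, and the passage from polynomial data in $\mathbf W_{N,r}$ to concrete left annihilators of $E_{N,r}$ is fairly indirect. I expect that a complete proof will require either a conceptual identification of $\mathbf W_{N,r}$ with a set of generators for the depth-graded motivic Lie algebra in the totally odd sector --- which is close in strength to Brown's Conjecture~\ref{con:Brown} --- or an inductive argument tracking the entire chain of matrices $E_{N,r}^{(j)}$ entering the factorization $C_{N,r}=E_{N,r}^{(2)}\cdots E_{N,r}^{(r-1)}E_{N,r}$. A full resolution is therefore likely of comparable depth to the Broadhurst-Kreimer conjecture itself, which is presumably why the present paper derives all of its consequences only conditionally on the statement above.
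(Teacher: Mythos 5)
This statement is a \emph{conjecture}, not a theorem; the paper gives no proof and in fact the remark immediately following it records that only the $r=2$ case is known and that Tasaka's suggested injectivity argument appears to contain an unrepaired gap. Your write-up is therefore not competing with a proof in the paper, and you correctly recognise this at the end. Your handling of $r=2$ is right: by Theorem~\ref{thm:Schneps} one has $\pi(\mathbf W_{N,2})=\ker\tr E_{N,2}$, so on this subspace $\pi(P)F_{N,2}=-\pi(P)$ and the map is visibly a bijection onto $\ker\tr E_{N,2}$; this is exactly the consequence of Theorem~\ref{thm:Schneps} the Remark alludes to.

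Two concrete cautions on the sketched inductive step, however. First, the decomposition~\eqref{eq:decomposition} cleanly separates the pair $(x_1,x_2)$ from $(x_3,\ldots,x_r)$, but the Ihara action does not: $f\ihara g$ contains terms in $f(x_{i+1}-x_i)$ and $f(x_i-x_{i+1})$ for every $1\leq i\leq r-1$, so the $i=2$ term mixes the two blocks and a leading-term reduction to the $r=2$ case applied to each $Q_j$ does not go through as stated. Second, as you note yourself, a dimension count for surjectivity is circular, since $\rank E_{N,r}$ is precisely the unknown; note that Corollary~\ref{cor:enrineq} in the paper only gives an inequality and that even this already presupposes injectivity. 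Your final diagnosis---that a complete proof would need input of a depth comparable to the uneven Broadhurst-Kreimer conjecture---is consistent with the paper's own posture: everything in Section~\ref{sec:main results} is proved only conditionally on the injectivity of~\eqref{eq:TasakasFail}, and Conjecture~\ref{con:imiso} is introduced as an additional hypothesis rather than derived from Conjecture~\ref{con:isomorphism}.
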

\begin{rem}\label{rem:isor2}
As far as the authors are aware, only the case $r=2$ is known, which is an immediate consequence of Theorem~\ref{thm:Schneps}. However, assuming injectivity of the morphisms \eqref{eq:TasakasFail} one has the following relation.
\end{rem}
\begin{cor}[Tasaka {\cite[Corollary 3.7]{tasakaPublished}}]\label{cor:enrineq}For all $r\geq2$,
    \begin{equation*}
        \sum_{N>0}\dim_{\mathbb Q}\ker\tr E_{N,r}\cdot x^N\geq\mathbb O(x)^{r-2}\mathbb S(x)\;.
    \end{equation*}
\end{cor}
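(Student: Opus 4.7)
The plan is to combine the injectivity of the map from Theorem~\ref{thm:injection} with the dimension count of Lemma~\ref{lem:wnreq}. Under the hypothesis, for every $N>0$ the $\mathbb Q$-linear map
\[
\mathbf W_{N,r}\longrightarrow \ker\tr E_{N,r},\qquad P\longmapsto \pi(P)F_{N,r}
\]
is injective, so we get the coefficient-wise inequality $\dim_{\mathbb Q}\mathbf W_{N,r}\leq\dim_{\mathbb Q}\ker\tr E_{N,r}$.

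The first step is simply to pass from this pointwise inequality to an inequality of generating series: multiplying by $x^N$ and summing over $N>0$ yields
\[
\sum_{N>0}\dim_{\mathbb Q}\mathbf W_{N,r}\cdot x^N\leq \sum_{N>0}\dim_{\mathbb Q}\ker\tr E_{N,r}\cdot x^N,
\]
where the symbol $\leq$ is understood coefficient-wise, as in the statement.

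The second step is to rewrite the left-hand side using Lemma~\ref{lem:wnreq}, which gives
\[
\sum_{N>0}\dim_{\mathbb Q}\mathbf W_{N,r}\cdot x^N=\mathbb O(x)^{r-2}\mathbb S(x).
\]
Combining the two displays yields the claimed lower bound. There is no real obstacle here, since both inputs (the assumed injectivity and the dimension formula) have already been established or assumed earlier in the paper; the corollary is genuinely just their formal combination.
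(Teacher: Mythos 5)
Your proof is correct and matches what the paper intends: the corollary is stated immediately after the remark explaining that one assumes the injectivity of the map in Theorem~\ref{thm:injection}, and it then follows formally from the induced coefficient-wise inequality $\dim_{\mathbb Q}\mathbf W_{N,r}\leq\dim_{\mathbb Q}\ker\tr E_{N,r}$ together with the generating-function identity of Lemma~\ref{lem:wnreq}. This is exactly the argument you give, so there is nothing to add.
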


\section{Main Tools}\label{sec:main tools}

\subsection{Decompositions of $E_{N,r}^{(j)}$}

We use the following decomposition lemma:
\begin{lem}\label{lem:blockdia}
Let $2\leq j\leq r-1$ and arrange the indices $(m_1,\ldots,m_r),(n_1,\ldots,n_r)\in S_{N,r}$ of $E_{N,r}^{(j)}$ in lexicographically decreasing order. Then, the matrix $E_{N,r}^{(j)}$ has block diagonal structure
\begin{equation*}
    E_{N,r}^{(j)}=\diag\left(E_{3r-3,r-1}^{(j)},E_{3r-1,r-1}^{(j)},\ldots,E_{N-3,r-1}^{(j)}\right)\;.
\end{equation*}
\end{lem}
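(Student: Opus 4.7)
The strategy is essentially bookkeeping with indices, so I do not expect any substantial obstacle. My plan is to exploit the Kronecker $\delta$ factor in the definition of $E_{N,r}^{(j)}$, which forces $m_i = n_i$ for $i = 1, \ldots, r-j$; since $j \leq r-1$ this includes in particular $m_1 = n_1$. First I would note that all nonzero entries of $E_{N,r}^{(j)}$ therefore lie in ``diagonal blocks'' indexed by the common value of $m_1$ and $n_1$, provided the rows and columns are ordered so that tuples sharing the same first entry appear consecutively.

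Next I would observe that the lexicographic decreasing order on $S_{N,r}$ accomplishes precisely this grouping, since it sorts tuples primarily by their first coordinate. The admissible values of $m_1$ are the odd integers satisfying $3 \leq m_1 \leq N - 3(r-1)$; in lexicographic decreasing order they appear from largest ($m_1 = N - 3(r-1)$) to smallest ($m_1 = 3$), producing a block diagonal decomposition with one block per admissible $m_1$.

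The final step is to identify each block. Fixing $m_1 = n_1 = a$, the corresponding block is indexed by tuples $(m_2,\ldots,m_r),(n_2,\ldots,n_r) \in S_{N-a,r-1}$, with entry
\begin{align*}
    \delta\faul{m_2,\ldots,m_{r-j}}{n_2,\ldots,n_{r-j}}\cdot e\faul{m_{r-j+1},\ldots,m_r}{n_{r-j+1},\ldots,n_r}\;.
\end{align*}
After the relabeling $m_i' \coloneqq m_{i+1}$ (and analogously for the $n_i$), this is precisely the entry of $E^{(j)}_{N-a,r-1}$, where the $\delta$-factor now acts on the first $(r-1)-j$ entries of the shifted tuples. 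Assembling these blocks in the order dictated by decreasing lexicographic order on $m_1$ yields the sequence $E^{(j)}_{3r-3,r-1},E^{(j)}_{3r-1,r-1}, \ldots, E^{(j)}_{N-3,r-1}$ as claimed. The one point requiring minor care is the edge case $j = r-1$, where the $\delta$-factor in $E^{(j)}_{N-a,r-1}$ acts on zero indices; under the standard convention that an empty $\delta$-product equals $1$, the block simply reduces to $E_{N-a,r-1}$, in agreement with Definition~\ref{def:cnr} treating the final factor of $C_{N,r}$ as $E_{N,r}$ rather than $E_{N,r}^{(r)}$.
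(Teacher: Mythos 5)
Your proof is correct and unpacks what the paper leaves as ``this follows directly from Definition~\ref{def:enrq}'': the Kronecker $\delta$ forces $m_1 = n_1$ (since $j \leq r-1$ gives $r-j \geq 1$), lexicographically decreasing order groups indices by their first entry, and the fixed-$m_1$ blocks are identified with $E^{(j)}_{N-a,r-1}$ by the shift $m'_i = m_{i+1}$. Your handling of the $j = r-1$ edge case via the empty-$\delta$ convention matches the paper's implicit identification $E^{(r-1)}_{\cdot,r-1} = E_{\cdot,r-1}$, so there is nothing to add.
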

\begin{proof}
    This follows directly from Definition~\ref{def:enrq}.
\end{proof}
\begin{cor}\label{cor:blockdia}
    We have
    \begin{equation*}
        E_{N,r}^{(2)}E_{N,r}^{(3)}\cdots E_{N,r}^{(r-1)}=\diag\left(C_{3r-3,r-1},C_{3r-1,r-1},\ldots,C_{N-3,r-1}\right)\;.
    \end{equation*}
\end{cor}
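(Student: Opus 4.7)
The plan is to apply Lemma~\ref{lem:blockdia} separately to each factor $E_{N,r}^{(j)}$ for $j=2,\ldots,r-1$ and then exploit the fact that all the resulting decompositions share the same block partition. Concretely, Lemma~\ref{lem:blockdia} decomposes every such $E_{N,r}^{(j)}$ into blocks indexed by the odd integer $m_1\in\{3,5,\ldots,N-3r+3\}$ obtained by fixing the first coordinate of the tuple, and each block is the corresponding matrix $E_{N-m_1,r-1}^{(j)}$. Since this partition of $S_{N,r}$ does not depend on $j$, the product of block-diagonal matrices with the same partition is again block diagonal, and the block on a given diagonal position is just the product of the corresponding blocks.

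First I would write out what this general product-of-block-diagonals fact gives here: the $m_1$-block of $E_{N,r}^{(2)}E_{N,r}^{(3)}\cdots E_{N,r}^{(r-1)}$ is
\begin{align*}
E_{N-m_1,r-1}^{(2)}E_{N-m_1,r-1}^{(3)}\cdots E_{N-m_1,r-1}^{(r-1)}.
\end{align*}
Next I would compare this with Definition~\ref{def:cnr} of $C_{M,r-1}$, which reads $C_{M,r-1}=E_{M,r-1}^{(2)}\cdot E_{M,r-1}^{(3)}\cdots E_{M,r-1}^{(r-2)}\cdot E_{M,r-1}$. The only discrepancy between the two expressions is that our last factor is $E_{M,r-1}^{(r-1)}$ rather than $E_{M,r-1}$. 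But by Definition~\ref{def:enrq}, in $E_{M,r-1}^{(r-1)}$ the Kronecker delta ranges over the empty prefix (zero coordinates) and is therefore identically $1$, whereas the $e$-factor spans all $r-1$ coordinates; thus $E_{M,r-1}^{(r-1)}$ coincides entry by entry with $E_{M,r-1}$. With this identification, the $m_1$-block is precisely $C_{N-m_1,r-1}$.

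Finally, arranging tuples in lexicographically decreasing order arranges $m_1$ in decreasing order, so the top-left block (largest $m_1=N-3r+3$) is $C_{3r-3,r-1}$ and the bottom-right block (smallest $m_1=3$) is $C_{N-3,r-1}$; this reproduces exactly the diagonal in the statement. There is really no obstacle here beyond bookkeeping: the proof is a formal assembly of Lemma~\ref{lem:blockdia} with Definition~\ref{def:cnr}, and the only non-routine step is the observation $E_{M,r-1}^{(r-1)}=E_{M,r-1}$, which is immediate from the definitions.
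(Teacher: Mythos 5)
Your proof is correct and follows the same approach as the paper's (very terse) one-line argument: decompose each factor block-diagonally via Lemma~\ref{lem:blockdia}, multiply block by block, and identify the result with $C_{k,r-1}$ via Definition~\ref{def:cnr}. The one step you single out as needing verification, namely $E_{M,r-1}^{(r-1)}=E_{M,r-1}$ because the Kronecker delta ranges over an empty prefix, is indeed the slightly hidden ingredient, and is also noted in the paper shortly afterwards as $E_{N,r}^{(r)}=E_{N,r}$; you are right that it is immediate from Definition~\ref{def:enrq}.
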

\begin{proof}
    Multiplying the block diagonal representations of $E_{N,r}^{(2)},E_{N,r}^{(3)},\ldots,E_{N,r}^{(r-1)}$ block by block together with Definition~\ref{def:cnr} yields the desired result.
\end{proof}
\begin{cor} \label{cor:enrjocnr}
    For all $r\geq 3$,
    \begin{equation*}
        \sum_{N>0}\dim_{\mathbb Q}\ker\left(E_{N,r}^{(2)}\cdots E_{N,r}^{(r-1)}\right)\cdot x^N=\mathbb O(x)\sum_{N>0}\dim_{\mathbb Q}\ker C_{N,r-1}\cdot x^N\;.
    \end{equation*}
\end{cor}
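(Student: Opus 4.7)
The proof is essentially a generating function bookkeeping on top of Corollary~\ref{cor:blockdia}, so my plan is short.

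First I would use Corollary~\ref{cor:blockdia} to write
$E_{N,r}^{(2)}E_{N,r}^{(3)}\cdots E_{N,r}^{(r-1)}$ as the block diagonal matrix $\diag(C_{3r-3,r-1},C_{3r-1,r-1},\ldots,C_{N-3,r-1})$. Since the kernel of a block diagonal matrix is the direct sum of the kernels of the blocks, this gives
\begin{align*}
\dim_{\mathbb Q}\ker\bigl(E_{N,r}^{(2)}\cdots E_{N,r}^{(r-1)}\bigr)=\sum_{\substack{m\geq 3\text{ odd}\\ N-m\geq 3(r-1)}}\dim_{\mathbb Q}\ker C_{N-m,r-1}\;,
\end{align*}
where the summation index $m$ corresponds to the value of the first coordinate $m_1$ that is fixed within each diagonal block (recall the indices $(m_1,\ldots,m_r)\in S_{N,r}$ are arranged in lexicographically decreasing order).

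Next I would multiply by $x^N$, sum over $N>0$, and swap the order of summation. Substituting $k=N-m$ turns the double sum into the Cauchy product
\begin{align*}
\sum_{N>0}\dim_{\mathbb Q}\ker\bigl(E_{N,r}^{(2)}\cdots E_{N,r}^{(r-1)}\bigr)\cdot x^N=\left(\sum_{\substack{m\geq 3\\ m\text{ odd}}}x^m\right)\left(\sum_{k>0}\dim_{\mathbb Q}\ker C_{k,r-1}\cdot x^k\right)\;,
\end{align*}
and the first factor is by definition $\mathbb O(x)$, which gives the claim.

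The only thing to be mindful of is verifying that the constraint $N-m\geq 3(r-1)$ appearing in the block decomposition is harmless when passing to generating functions: this is automatic because $\dim_{\mathbb Q}\ker C_{k,r-1}=0$ whenever $k<3(r-1)$ (since $S_{k,r-1}=\emptyset$ in that range), so extending the inner sum to all $k>0$ and all odd $m\geq 3$ introduces only zero terms. There is no real obstacle here; the statement is a direct corollary of Corollary~\ref{cor:blockdia} together with the multiplicativity of generating functions under convolution.
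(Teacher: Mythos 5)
Your proof is correct and takes essentially the same approach as the paper: both invoke Corollary~\ref{cor:blockdia} to reduce the kernel of the product to a direct sum of kernels of $C_{k,r-1}$, then convert the resulting convolution into a product of generating series with $\mathbb O(x)$.
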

\begin{proof}
    According to Corollary~\ref{cor:blockdia}, the matrix $E_{N,r}^{(2)}\cdots E_{N,r}^{(r-1)}$ has block diagonal structure, the blocks being $C_{3r-3,r-1},C_{3r-1,r-1},\ldots,C_{N-3,r-1}$. Hence, 
    \begin{align*}
        \sum_{N>0}\dim_{\mathbb Q}\ker\left(E_{N,r}^{(2)}\cdots E_{N,r}^{(r-1)}\right)\cdot x^N &= \sum_{N>0} \left( \sum_{k=3r-3}^{N-3}\dim_{\mathbb Q}\ker C_{k,r-1} \right) \cdot x^N \\
        &=\sum_{N>0}\dim_{\mathbb Q}\ker C_{N,r-1}\left(x^{N+3}+x^{N+5}+x^{N+7}+\cdots\right)\\
        &=\mathbb O(x)\sum_{N>0}\dim_{\mathbb Q}\ker C_{N,r-1}\cdot x^N\;,
    \end{align*}
    thus proving the assertion.
\end{proof}

\subsection{Connection to polynomials}
Motivated by Theorem~\ref{thm:injection}, we interpret the right action of the matrices $E_{N,r}^{(2)},\ldots,E_{N,r}^{(r-1)},E_{N,r}^{(r)}=E_{N,r}$ on $\Vect_{N,r}$ as endomorphisms of the polynomial space $\mathbf V_{N,r}$. Having established this, we will prove Theorems \ref{thm:case3} and \ref{thm:case4} from a polynomial point of view.
\begin{dfn}\label{def:phij}
    The \emph{restricted totally even part} of a polynomial $Q(x_1,\ldots,x_r)\in\mathbf V_{N,r}$ is the sum of all of its monomials, in which each exponent of $x_1,\ldots,x_r$ is even and at least 2. Let $r\geq j$. We define the $\mathbb Q$-linear map
    \begin{equation*}
        \phi j\colon\mathbf V_{N,r}\longrightarrow\mathbf V_{N,r}\;,
    \end{equation*}
    which maps each polynomial $Q(x_1,\ldots,x_r)\in \mathbf V_{N,r}$ to the restricted totally even part of
    \begin{multline*}
        Q(x_1,\ldots,x_r)+\sum_{i=r-j+1}^{r-1}\Bigl(Q(x_1,\ldots,x_{r-j},x_{i+1}-x_i,x_{r-j+1},\ldots,\hat x_{i+1},\ldots,x_r)\\ -Q(x_1,\ldots,x_{r-j},x_{i+1}-x_i,x_{r-j+1},\ldots,\hat x_i,\ldots,x_r)\Bigr)\;.
    \end{multline*}
\end{dfn}
\begin{rem}
Note that $\phi 1\equiv\id_{\mathbf V_{N,r}}$. 
\end{rem}
The following lemma shows that the map $\phi j$ corresponds to the right action of the matrix $E_{N,r}^{(j)}$ on $\Vect_{N,r}$ via the isomorphism $\pi$.

\begin{lem}\label{lem:phij}
Let $r\geq j$. Then, for each polynomial $Q\in\mathbf V_{N,r}$,
\begin{equation*}
    \pi\left(\phi j(Q)\right)=\pi(Q)E_{N,r}^{(j)}\;.
\end{equation*}
or equivalently, the following diagram commutes:
\begin{center}
    \begin{tikzpicture}
        \node (a) at (0,2) {$\mathbf V_{N,r}$};
        \node (b) at (3,2) {$\mathbf V_{N,r}$};
        \node (c) at (0,0) {$\Vect_{N,r}$};
        \node (d) at (3,0) {$\Vect_{N,r}$};
        \node (e) at (0,-0.5) {\small$v$};
        \node (f) at (3,-0.5) {\small$v\cdot E_{N,r}^{(j)}$};
        \node (c1) at (e) {\phantom{$\Vect_{N,r}$}};
        \node (d1) at (f) {\phantom{$\Vect_{N,r}$}};
        \draw[->] (a) -- (b) node[pos=0.5,above]{\small$\phi j$};
        \draw[->] (c) -- (d);
        \draw[->] (a) -- (c) node[pos=0.5,sloped,above=-2pt]{$\sim$} node[pos=0.5,left]{\small$\pi$};
        \draw[->] (b) -- (d) node[pos=0.5,sloped,above=-2pt]{$\sim$} node[pos=0.5,left]{\small$\pi$};
        \draw[|->] (c1) -- (d1);
    \end{tikzpicture}
\end{center}

\end{lem}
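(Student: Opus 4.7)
The plan is to use $\mathbb{Q}$-linearity to reduce to a single monomial $Q = x_1^{m_1-1}\cdots x_r^{m_r-1}$ with $(m_1,\ldots,m_r) \in S_{N,r}$, and then recognise the operation $\phi j$ on this monomial as an Ihara product in the last $j$ variables with the first $r-j$ variables held fixed.

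Since both $Q \mapsto \pi(\phi j(Q))$ and $Q \mapsto \pi(Q) E_{N,r}^{(j)}$ are $\mathbb{Q}$-linear, I may assume $Q$ is such a monomial. Then $\pi(Q)$ is the standard basis vector at index $(m_1,\ldots,m_r)$, so $\pi(Q) E_{N,r}^{(j)}$ is simply the $(m_1,\ldots,m_r)$-th row of $E_{N,r}^{(j)}$; by Definition~\ref{def:enrq}, its $(n_1,\ldots,n_r)$-entry is $\delta\faul{m_1,\ldots,m_{r-j}}{n_1,\ldots,n_{r-j}} e\faul{m_{r-j+1},\ldots,m_r}{n_{r-j+1},\ldots,n_r}$. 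It remains to show that $\phi j(Q)$ is exactly the polynomial whose $\pi$-image is this row vector.

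To do that, I would unpack Definition~\ref{def:phij} on the monomial. In every summand the variables $x_1,\ldots,x_{r-j}$ appear only through the factor $x_1^{m_1-1}\cdots x_{r-j}^{m_{r-j}-1}$, which therefore pulls out of the sum. The remaining polynomial in $x_{r-j+1},\ldots,x_r$ — which I would relabel as $y_1,\ldots,y_j$ via $y_k \coloneqq x_{r-j+k}$ — is, by direct inspection, the Ihara product $f \ihara g$ evaluated at $(y_1,\ldots,y_j)$, where $f(t) \coloneqq t^{m_{r-j+1}-1}$ and $g(y_1,\ldots,y_{j-1}) \coloneqq y_1^{m_{r-j+2}-1}\cdots y_{j-1}^{m_r-1}$. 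The only sign issue is that $\deg f = m_{r-j+1}-1$ is even (since $m_{r-j+1}$ is odd), so the factor $(-1)^{\deg f}$ collapses to $+1$ and both arguments $f(y_{i'+1}-y_{i'})$ and $f(y_{i'}-y_{i'+1})$ equal $(y_{i'+1}-y_{i'})^{m_{r-j+1}-1}$, matching the antisymmetric pattern of Definition~\ref{def:phij}. By equation~\eqref{eq:e}, this Ihara product expands as $\sum e\faul{m_{r-j+1},\ldots,m_r}{n_{r-j+1},\ldots,n_r}\, y_1^{n_{r-j+1}-1}\cdots y_j^{n_r-1}$.

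Taking the restricted totally even part then eliminates precisely those terms whose $n_l$ (for $l > r-j$) are not odd or not $\geq 3$; the first $r-j$ exponents are already acceptable because $m_1,\ldots,m_{r-j}$ are odd and $\geq 3$ by assumption. Applying $\pi$ to the resulting polynomial reproduces exactly the row of $E_{N,r}^{(j)}$ identified in the second paragraph, proving the lemma. The main (and essentially only) obstacle is index bookkeeping: one must carefully verify that the sum over $i = r-j+1,\ldots,r-1$ in Definition~\ref{def:phij}, after the relabeling $y_k = x_{r-j+k}$, corresponds to the sum over $i' = 1,\ldots,j-1$ in the Ihara formula for $f \ihara g$, and that the omitted $\hat x_{i+1}$ (respectively $\hat x_i$) on Tasaka's side lines up with $\hat y_{i'+1}$ (respectively $\hat y_{i'}$) on the Ihara side.
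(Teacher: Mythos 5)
Your proof is correct. The core identification — that after factoring out $x_1^{m_1-1}\cdots x_{r-j}^{m_{r-j}-1}$ the map $\phi j$ acts on the last $j$ variables as the Ihara product $f\ihara g$ with $f(t)=t^{m_{r-j+1}-1}$, and that since $\deg f$ is even both the sign $(-1)^{\deg f}$ and the asymmetry between $f(y_{i'+1}-y_{i'})$ and $f(y_{i'}-y_{i'+1})$ disappear — is exactly the right one, and the reduction to a basis monomial by linearity cleanly isolates the row of $E_{N,r}^{(j)}$ given in Definition~\ref{def:enrq}.

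Where you diverge from the paper is in structure rather than substance. The paper proves the case $r=j$ directly (which is exactly your Ihara-action identification) and then runs an induction on $r$, peeling off a single leading variable $x_1$ at each step and invoking the block-diagonal decomposition of $E_{N,r}^{(j)}$ from Lemma~\ref{lem:blockdia}. You instead peel off all $r-j$ leading variables at once, so no induction and no appeal to Lemma~\ref{lem:blockdia} are needed. Your route is slightly more direct and self-contained; the paper's inductive version has the minor advantage of recycling the block-structure lemma it has already established, and the deferred bookkeeping in your final paragraph (matching the summation range $i=r-j+1,\ldots,r-1$ to $i'=1,\ldots,j-1$ and aligning the omitted variables) is precisely what the induction lets the paper avoid spelling out explicitly. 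Both arguments are sound.
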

\begin{proof}
We proceed by induction on $r$. Let $r=j$ and
\begin{equation*}
    Q(x_1,\ldots,x_j)=\sum_{(n_1,\ldots,n_j)\in S_{N,j}}q_{n_1,\ldots,n_j}x_1^{n_1-1}\cdots x_r^{n_j-1}\;.
\end{equation*}Then, $E_{N,j}^{(j)}=E_{N,j}$ and thus
\begin{equation*}
    \pi(Q)E_{N,j}=\left(\sum_{(m_1,\ldots,m_j)\in S_{N,j}}q_{n_1,\ldots,n_j}e\faul{m_1,\ldots,m_j}{n_1,\ldots,n_j}\right)_{(n_1,\ldots,n_j)\in S_{N,j}}\;.
\end{equation*}
By \eqref{eq:e} and linearity of the Ihara action $\ihara$, the row vector on the right-hand side corresponds to $\pi$ applied to the restricted totally even part of the polynomial
\begin{align}\label{eq:phiihara}
    \sum_{(n_1,\ldots,n_j)\in S_{N,j}}q_{n_1,\ldots,n_j}x_1^{n_1-1}\ihara\left(x_1^{n_2-1}\cdots x_{j-1}^{n_j-1}\right)\;.
\end{align}
On the other hand, plugging $r=j$ into Definition~\ref{def:phij} yields that $\phi[j]j(Q(x_1,\ldots,x_j))$ corresponds to the restricted totally even part of some polynomial, which by definition of the Ihara action $\ihara$ coincides with the polynomial defined in \eqref{eq:phiihara}. Thus, the claim holds for $r=j$.

Now suppose that $r\geq j+1$ and the claim is proven for all smaller $r$. Let us decompose
\begin{equation*}
    Q(x_1,\ldots,x_r)=\sum_{\substack{n_1=3\\n_1\text{ odd}}}^{N-(3r-3)}x_1^{n_1-1}\cdot Q_{N-n_1}(x_2,\ldots,x_r)\;,
\end{equation*}where the $Q_k$ are restricted totally even homogeneous polynomials in $r-1$ variables. In particular, $Q_k\in\mathbf V_{k,r-1}$ for all $k$. Arrange the indices of $\pi(Q)$ in lexicographically decreasing order. Then, by grouping consecutive entries, $\pi(Q)$ is the list-like concatenation of $\pi(Q_{3r-3}),\ldots,\pi(Q_{N-3})$, which we denote by
\begin{equation*}
    \pi(Q)=\big(\pi(Q_{3r-3}),\pi(Q_{3r-1}),\ldots,\pi(Q_{N-3})\big)\;.
\end{equation*}
Since we have lexicographically decreasing order of indices, the block diagonal structure of $E_{N,r}^{(j)}$ stated in Lemma~\ref{lem:blockdia} yields
\begin{align*}
    \pi(Q)E_{N,r}^{(j)}&=\left(\pi(Q_{3r-3})E_{3r-3,r-1}^{(j)},\pi(Q_{3r-1})E_{3r-1,r-1}^{(j)},\ldots,\pi(Q_{N-3})E_{N-3,r-1}^{(j)}\right)\\
    &=\left(\pi\left(\phi[r-1]j(Q_{3r-3})\right),\pi\left(\phi[r-1]j(Q_{3r-1})\right),\ldots,\pi\left(\phi[r-1]j(Q_{N-3})\right)\right)\\
    &=\pi\left(\phi j(Q)\right)
\end{align*}by linearity of $\phi j$ and the induction hypothesis. This shows the assertion.
\end{proof}

\begin{cor}\label{cor:phiEiso}
For all $r\geq2$,
\begin{equation*}
\im \tr{\left(E_{N,r}^{(2)}\cdots E_{N,r}^{(r-1)}\right)}\cap\ker\tr E_{N,r}\cong\ker\phi r\cap\im\left(\phi {r-1}\circ\cdots\circ\phi 2\right)\;.
\end{equation*}
\end{cor}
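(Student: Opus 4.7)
The plan is to push the entire statement across the $\mathbb{Q}$-linear isomorphism $\pi\colon\mathbf V_{N,r}\to\Vect_{N,r}$ and let Lemma~\ref{lem:phij} do the work. Under the row-vector conventions of the paper, $\ker\tr E_{N,r}$ denotes the left null space $\{v\in\Vect_{N,r}\mid vE_{N,r}=0\}$, and $\im\tr{(E_{N,r}^{(2)}\cdots E_{N,r}^{(r-1)})}$ denotes the row span $\{wE_{N,r}^{(2)}\cdots E_{N,r}^{(r-1)}\mid w\in\Vect_{N,r}\}$; so what I really need to verify is that $\pi$ identifies these two subspaces of $\Vect_{N,r}$ with $\ker\phi r$ and $\im(\phi{r-1}\circ\cdots\circ\phi 2)$ inside $\mathbf V_{N,r}$.

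First I would iterate Lemma~\ref{lem:phij}: starting from $\pi(Q)E_{N,r}^{(2)}=\pi(\phi 2(Q))$ and multiplying on the right by $E_{N,r}^{(3)},\ldots,E_{N,r}^{(r-1)}$ in turn produces
\[\pi(Q)\cdot E_{N,r}^{(2)}E_{N,r}^{(3)}\cdots E_{N,r}^{(r-1)}=\pi\bigl((\phi{r-1}\circ\cdots\circ\phi 2)(Q)\bigr)\]
for every $Q\in\mathbf V_{N,r}$, while the case $j=r$ of Lemma~\ref{lem:phij} gives $\pi(Q)E_{N,r}=\pi(\phi r(Q))$. Because $\pi$ is bijective, these two identities transport kernels and images of the polynomial-side maps to the left kernels and row spans on the matrix side: $\pi$ restricts to $\mathbb{Q}$-linear isomorphisms
\[\ker\phi r\xrightarrow{\ \sim\ }\ker\tr E_{N,r}\qquad\text{and}\qquad\im(\phi{r-1}\circ\cdots\circ\phi 2)\xrightarrow{\ \sim\ }\im\tr{(E_{N,r}^{(2)}\cdots E_{N,r}^{(r-1)})}.\]

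Since any vector-space isomorphism commutes with taking intersections of subspaces, combining the two restrictions of $\pi$ yields the claimed isomorphism. I do not expect any genuine obstacle: the content is just an unwinding of definitions together with Lemma~\ref{lem:phij}. The only point that requires some care is aligning the order of the composition $\phi{r-1}\circ\cdots\circ\phi 2$ with the matrix product $E_{N,r}^{(2)}\cdots E_{N,r}^{(r-1)}$, which is automatically forced by Lemma~\ref{lem:phij} as soon as one tracks which factor is acting from the right at each step of the iteration.
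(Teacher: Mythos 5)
Your proposal is correct and follows essentially the same approach as the paper's proof: both rest on Lemma~\ref{lem:phij}, which the paper packages as a commutative diagram while you unwind into explicit iterated identities $\pi(Q)\cdot E_{N,r}^{(2)}\cdots E_{N,r}^{(r-1)}=\pi\bigl((\phi{r-1}\circ\cdots\circ\phi 2)(Q)\bigr)$ and $\pi(Q)E_{N,r}=\pi(\phi r(Q))$. If anything you are slightly more careful than the paper's one-line conclusion, since you explicitly note that it is the \emph{same} isomorphism $\pi$ that restricts to both subspaces, which is what ensures the intersection is transported correctly rather than just each factor being abstractly isomorphic.
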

\begin{proof} By the previous Lemma~\ref{lem:phij}, the following diagram commutes:
\begin{center}
    \begin{tikzpicture}
        \node (a) at (0,2) {$\mathbf V_{N,r}$};
        \node (b) at (3,2) {$\mathbf V_{N,r}$};        
        \node (c) at (6,2) {$\cdots$};
        \node (d) at (9,2) {$\mathbf V_{N,r}$};
        \node (e) at (12,2) {$\mathbf V_{N,r}$};
        \node (A) at (0,0) {$\Vect_{N,r}$};
        \node (B) at (3,0) {$\Vect_{N,r}$};
        \node (C) at (6,0) {$\cdots$};
        \node (D) at (9,0) {$\Vect_{N,r}$};
        \node (E) at (12,0) {$\Vect_{N,r}$};
        \foreach \p/\q in{a/A,b/B,d/D,e/E}{
        \draw[->] (\p) -- (\q) node[pos=0.5,sloped,above=-2pt]{$\sim$} node[pos=0.5,left]{\small$\pi$};
        }
        \foreach \p/\q/\j in{a/b/2,b/c/3,c/d/r-1,d/e/r}{
        \draw[->] (\p) -- (\q) node[pos=0.5,above]{\small $\phi{\j}$};
        }
        \foreach \p/\q/\j in{A/B/2,B/C/3,C/D/r-1,D/E/r}{
        \draw[->] (\p) -- (\q) node[pos=0.5,above]{\small ${}\cdot E_{N,r}^{(\j)}$};
        }
    \end{tikzpicture}
\end{center}
From this, we have $\im\tr{\left(E_{N,r}^{(2)}\cdots E_{N,r}^{(r-1)}\right)}\cong\im\left(\phi {r-1}\circ\cdots\circ\phi 2\right)$ and $\ker\tr E_{N,r}\cong\ker\phi r$. Thereby, the claim is established.
\end{proof}

\begin{lem}\label{lem:kerphi}Let $j\leq r-1$. Then,
    \begin{equation*}
        \ker\phi j\cong\bigoplus_{n<N}\mathbf V_{N-n,r-j}\otimes\ker E_{n,j}\;.
    \end{equation*}
\end{lem}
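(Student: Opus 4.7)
The plan is to exploit the fact that $\phi j$ only acts nontrivially on the last $j$ of the $r$ variables, so that it decomposes as a tensor product of the identity on the first $r-j$ variables with the map $\phi[j]j$ on the last $j$ variables. The desired isomorphism will then follow from the $r=j$ case of Lemma~\ref{lem:phij}.

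First, I would fix the natural decomposition
\[
\mathbf V_{N,r} \;=\; \bigoplus_{n<N} \mathbf V_{N-n,r-j}\otimes \mathbf V_{n,j},
\]
where the first tensor factor is regarded as living in the variables $x_1,\ldots,x_{r-j}$ and the second in $x_{r-j+1},\ldots,x_r$. This is valid because any restricted totally even monomial of degree $N-r$ in $r$ variables factors uniquely as such a product (the empty-sum conventions handle the summands with $n<3j$ or $N-n<3(r-j)$).

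Second, I would verify that $\phi j$ preserves this decomposition and acts as $\id\otimes\phi[j]j$ on each summand. Inspecting Definition~\ref{def:phij}, the summation index $i$ ranges only over $\{r-j+1,\ldots,r-1\}$, so the substitutions $x_{i+1}-x_i$ involve only the variables $x_{r-j+1},\ldots,x_r$; the first $r-j$ variables appear as mere spectators. Moreover, if $P(x_1,\ldots,x_{r-j})\in\mathbf V_{N-n,r-j}$, then $P$ is already restricted totally even in its variables, so taking the restricted totally even part of a product $P(x_1,\ldots,x_{r-j})\cdot S(x_{r-j+1},\ldots,x_r)$ equals $P$ times the restricted totally even part of $S$ in $x_{r-j+1},\ldots,x_r$. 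This shows that $\phi j$ respects the direct-sum decomposition and acts on $\mathbf V_{N-n,r-j}\otimes\mathbf V_{n,j}$ as $\id\otimes\phi[j]j$.

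Third, by the $r=j$ case of Lemma~\ref{lem:phij}, the endomorphism $\phi[j]j$ of $\mathbf V_{n,j}$ corresponds under the isomorphism $\pi$ to right multiplication by $E_{n,j}$, and hence $\ker\phi[j]j\cong\ker E_{n,j}$. Combining this with the elementary identity $\ker(\id_V\otimes T)=V\otimes \ker T$, valid for any linear endomorphism $T$ of a finite-dimensional vector space, yields
\[
\ker\phi j \;\cong\; \bigoplus_{n<N}\mathbf V_{N-n,r-j}\otimes\ker\phi[j]j \;\cong\; \bigoplus_{n<N}\mathbf V_{N-n,r-j}\otimes\ker E_{n,j}.
\]
The only point requiring a moment of care is the compatibility of the \emph{restricted totally even} projection with the tensor splitting, but this is immediate since the variable sets $\{x_1,\ldots,x_{r-j}\}$ and $\{x_{r-j+1},\ldots,x_r\}$ are disjoint. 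Consequently no real obstacle arises; the proof is essentially a bookkeeping exercise that reduces the general $\phi j$ to the already-understood case $r=j$.
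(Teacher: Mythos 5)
Your proof is correct and takes essentially the same approach as the paper: both decompose $\mathbf V_{N,r}$ over the first $r-j$ variables, observe that $\phi{j}$ only touches the last $j$ variables (so acts as $\id\otimes\phi[j]{j}$ on each summand), and apply the $r=j$ case of Lemma~\ref{lem:phij}. The only difference is cosmetic---you phrase the argument in tensor-product language while the paper writes out the coefficient polynomials $R_{n_1,\ldots,n_{r-j}}$ explicitly.
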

\begin{proof}
Let $Q\in\mathbf V_{N,r}$. We may decompose
\begin{equation*}
    Q(x_1,\ldots,x_r)=\sum_{n<N}\ \sum_{(n_1,\ldots,n_{r-j})\in S_{N-n,r-j}}x_1^{n_1-1}\cdots x_{r-j}^{n_{r-j}-1}R_{n_1,\ldots,n_{r-j}}(x_{r-j+1},\ldots,x_r)\;,
\end{equation*}
where $R_{n_1,\ldots,n_{r-j}}\in\mathbf V_{n,j}$ is a restricted totally even homogeneous polynomial. Note that we have $Q\in\ker\phi j$ if and only if $\phi[j]j(R_n)=0$ holds for each $R_n$ in the above decomposition. By Lemma~\ref{lem:phij}, $\phi[j]j(R_n)=0$ if and only if $\pi(R_n)\in\ker E_{n,j}$. Now, the assertion is immediate.
\end{proof}
\begin{cor}\label{cor:phijres}
    Let $2\leq j\leq r-2$. The restricted map
    \begin{equation*}
        \phii j{N,r}\colon\mathbf W_{N,r}\longrightarrow\mathbf W_{N,r}
    \end{equation*}
    is well-defined and satisfies
    \begin{equation*}
        \ker\phii j{N,r}\cong\bigoplus_{n<N}\mathbf W_{N-n,r-j}\otimes\ker E_{n,j}\;.
    \end{equation*}
\end{cor}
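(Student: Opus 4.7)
The plan is to exploit the hypothesis $j \leq r-2$, which forces $r-j \geq 2$, so that the variables $x_1, x_2$ featuring in the defining condition of $\mathbf W_{N,r}$ lie inside the first $r-j$ coordinates of $Q$. From Definition~\ref{def:phij}, these first $r-j$ coordinates are never touched by any substitution or reshuffling; the Ihara-like operation acts only on the last $j$ coordinates. I therefore expect $\phi j$ to split as $\id \otimes \phi[j]j$ on the tensor decomposition $\mathbf V_{N,r} = \bigoplus_m \mathbf V_{N-m,r-j} \otimes \mathbf V_{m,j}$ obtained by separating the first $r-j$ variables from the last $j$.

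The first step is to verify this factorization. On a pure tensor $f(x_1,\ldots,x_{r-j}) g(x_{r-j+1},\ldots,x_r)$, the polynomial inside the ``restricted totally even part'' in Definition~\ref{def:phij} equals $f$ times the analogous polynomial for $g$, because the substitutions affect only the last $j$ arguments. The projection to the restricted totally even part is multiplicative with respect to this variable split, since its defining condition (even exponent $\geq 2$) is imposed on each variable independently; moreover, it acts as the identity on $f \in \mathbf V_{N-m,r-j}$, which is already restricted totally even. Hence $\phi j = \id \otimes \phi[j]j$. Combining this with \eqref{eq:decomposition} and the further decomposition $\mathbf V_{N-n,r-2} = \bigoplus_m \mathbf V_{N-n-m,r-j-2} \otimes \mathbf V_{m,j}$ (which relies on $r-j \geq 2$) yields $\mathbf W_{N,r} = \bigoplus_{n,m} \mathbf W_{n,2} \otimes \mathbf V_{N-n-m,r-j-2} \otimes \mathbf V_{m,j}$, on which $\phi r$ acts as $\id \otimes \id \otimes \phi[j]j$. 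In particular $\phi j(\mathbf W_{N,r}) \subseteq \mathbf W_{N,r}$, so $\phii j{N,r}$ is well-defined.

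For the kernel I would take kernels summand-by-summand using $\ker(\id \otimes T) = V \otimes \ker T$, giving $\ker \phii j{N,r} \cong \bigoplus_{n,m} \mathbf W_{n,2} \otimes \mathbf V_{N-n-m,r-j-2} \otimes \ker\phi[j]j|_{\mathbf V_{m,j}}$. By Lemma~\ref{lem:phij} together with the isomorphism $\pi$, $\ker\phi[j]j|_{\mathbf V_{m,j}} \cong \ker E_{m,j}$. Regrouping by $m$ and reassembling the inner $n$-sum via \eqref{eq:decomposition}, applied with weight $N-m$ and depth $r-j$ (valid because $r-j \geq 2$) to identify $\bigoplus_n \mathbf W_{n,2} \otimes \mathbf V_{N-n-m,r-j-2}$ with $\mathbf W_{N-m,r-j}$, delivers the claim. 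I do not anticipate a serious obstacle; the argument is a direct elaboration of the proof of Lemma~\ref{lem:kerphi}, the only subtle point being that the restricted totally even projection respects the tensor decomposition, which is automatic since the condition is imposed variable-by-variable.
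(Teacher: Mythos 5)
Your proof is correct and takes essentially the same route as the paper: well-definedness because $j\leq r-2$ means $\phi j$ never touches $x_1,x_2$, and the kernel description via the same last-$j$-variables split used in Lemma~\ref{lem:kerphi}, now carried through compatibly with the $\mathbf W$-condition. You are merely more explicit about the tensor factorisation $\phi j = \id\otimes\phi[j]j$ and the compatibility of the restricted-totally-even projection with it (the paper compresses this to ``done just like in Lemma~\ref{lem:kerphi}''), and you have a harmless typo writing $\phi r$ where $\phi j$ is meant.
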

\begin{proof}
Since $j\leq r-2$, for each $Q\in\mathbf V_{N,r}$ the map $Q(x_1,\ldots,x_r)\mapsto\phi j(Q)$ does not interfere with $x_1$ or $x_2$ and thus not with the defining property of $\mathbf W_{N,r}$. Hence, $\phii j{N,r}$ is well-defined. The second assertion is done just like in the previous Lemma~\ref{lem:kerphi}.
\end{proof}
\begin{lem}\label{lem:fnr}
Let $r\geq3$. For all $P\in\mathbf W_{N,r}$,
\begin{equation*}
    \pi(-P)E_{N,r}^{(r-1)}=\pi(P)F_{N,r}\;.
\end{equation*}
\end{lem}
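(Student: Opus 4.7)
The plan is to translate the matrix identity into the polynomial language developed in this section, reducing it to a direct identity among the operators $\phi r$ and $\phi{r-1}$, which follows from the very definition of $\mathbf W_{N,r}$.

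First, I would apply Lemma~\ref{lem:phij} to rewrite both sides in terms of the polynomial maps. The right-hand side becomes
\[
\pi(P)F_{N,r}=\pi(P)E_{N,r}-\pi(P)=\pi\bigl(\phi{r}(P)-P\bigr),
\]
and the left-hand side becomes $\pi(-P)E_{N,r}^{(r-1)}=-\pi\bigl(\phi{r-1}(P)\bigr)$. Since $\pi$ is a $\mathbb Q$-linear isomorphism, the statement is equivalent to the polynomial identity
\[
\phi{r}(P)-\phi{r-1}(P)=-P\qquad\text{for every }P\in\mathbf W_{N,r}.
\]

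The next step is to compute the difference $\phi{r}(P)-\phi{r-1}(P)$ directly from Definition~\ref{def:phij}. For $j=r$ the summation index $i$ in the definition runs from $1$ to $r-1$, whereas for $j=r-1$ it runs from $2$ to $r-1$. Hence all terms with $i\geq 2$ cancel, and the $Q(x_1,\ldots,x_r)$ term cancels as well; only the $i=1$ contribution from $\phi{r}$ survives. Writing this out, $\phi{r}(P)-\phi{r-1}(P)$ equals the restricted totally even part of
\[
P(x_2-x_1,x_1,x_3,\ldots,x_r)-P(x_2-x_1,x_2,x_3,\ldots,x_r).
\]

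By the very definition of $\mathbf W_{N,r}$, this polynomial is precisely $-P(x_1,\ldots,x_r)$. Since $P\in\mathbf V_{N,r}$ is itself restricted totally even, taking the restricted totally even part does nothing, so the difference equals $-P$ as required. The only conceptual ingredient is the identification of which terms of $\phi r$ and $\phi{r-1}$ survive the cancellation; once this indexing is sorted out, the rest is immediate and there is no serious obstacle.
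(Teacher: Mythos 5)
The reduction to a polynomial identity is a reasonable plan, but both your target identity and the cancellation step contain errors, and they are related to missing the key ingredient of the paper's argument.

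Equating $\pi(-P)E_{N,r}^{(r-1)}=-\pi\bigl(\phi{r-1}(P)\bigr)$ with $\pi(P)F_{N,r}=\pi\bigl(\phi r(P)-P\bigr)$ and cancelling $\pi$ gives $-\phi{r-1}(P)=\phi r(P)-P$, i.e.\ $\phi r(P)+\phi{r-1}(P)=P$, not $\phi r(P)-\phi{r-1}(P)=-P$ as you wrote. More seriously, your assertion that ``all terms with $i\geq2$ cancel'' is false: in the $i$-th summand of $\phi r$ the expression $x_{i+1}-x_i$ occupies the \emph{first} argument slot of $Q$ (with $x_1,x_2,\dots$ shifted to slots $2,3,\dots$), whereas in $\phi{r-1}$ it occupies the \emph{second} slot, after $x_1$. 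The two summands are therefore obtained from one another by transposing the first two arguments of $Q$, which for a generic polynomial are distinct. They cancel in the \emph{sum} $\phi r(P)+\phi{r-1}(P)$, and only because $P\in\mathbf W_{N,r}$ is antisymmetric under $x_1\leftrightarrow x_2$ (this follows from the defining relation of $\mathbf W_{N,r}$ together with the evenness of $P$ in $x_1$). Then the two leading $P$-terms contribute $2P$ rather than zero, the lone $i=1$ summand of $\phi r$ equals $P(x_2-x_1,x_1,x_3,\dots,x_r)-P(x_2-x_1,x_2,x_3,\dots,x_r)=-P$ by the $\mathbf W_{N,r}$ relation, and one obtains $\phi r(P)+\phi{r-1}(P)=2P-P=P$ as required. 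The paper's proof makes exactly this move: it invokes the antisymmetry of $P$ under $x_1\leftrightarrow x_2$ to align the two sides term by term, so that the remaining discrepancy is precisely the defining relation of $\mathbf W_{N,r}$. Without that antisymmetry your cancellation does not go through.
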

\begin{proof}Recall that by Lemma~\ref{lem:phij}, 
\begin{align*}
    \pi(-P)E_{N,r}^{(r-1)}&=\pi\left( \phi {r-1}\big(-P(x_1,\ldots,x_r)\big)\right)\\
    &=\begin{multlined}[t]
    \pi\bigg(-P(x_1,\ldots,x_r)+\sum_{i=2}^{r-1}\Big(-P(x_1,x_{i+1}-x_i,x_2,\ldots, \hat x_{i+1},\ldots,x_r)\\
    +P(x_1,x_{i+1}-x_i,x_2,\ldots, \hat x_i,\ldots,x_r)\Big)\bigg)
    \end{multlined}\\
    &=\begin{multlined}[t]
    \pi\bigg(-P(x_1,\ldots,x_r)+\sum_{i=2}^{r-1}\Big(P(x_{i+1}-x_i,x_1,\ldots, \hat x_{i+1},\ldots,x_r)\\
    -P(x_{i+1}-x_i,x_1,\ldots, \hat x_i,\ldots,x_r)\Big)\bigg)\;,
    \end{multlined}
\end{align*}
since $-P$ is antisymmetric with respect to $x_1\leftrightarrow x_2$. In the same way we compute
\begin{align*}
    \pi(P)F_{N,r}&=\pi(P)\left(E_{N,r}-\id_{\Vect_{N,r}}\right)=\pi\big(\phi r(P(x_1,\ldots,x_r))\big)-\pi(P)\\
    &=\begin{multlined}[t]
    \pi\bigg(P(x_1,\ldots,x_r)+\sum_{i=1}^{r-1}\Big(P(x_{i+1}-x_i,x_1,\ldots, \hat x_{i+1},\ldots,x_r)\\
    -P(x_{i+1}-x_i,x_1,\ldots, \hat x_i,\ldots,x_r)\Big)\bigg)-\pi(P)\;.
    \end{multlined}
\end{align*}
Now the desired result follows from
\begin{equation*}
    P(x_1,x_2,\ldots,x_r)+P(x_2-x_1,x_1,x_3,\ldots,x_r)-P(x_2-x_1,x_2,\ldots,x_r)=0\;,
\end{equation*}
since $P$ is in $\mathbf W_{N,r}$.
\end{proof}
\begin{cor}\label{cor:kerimineq}
Assume that the map from Theorem~\ref{thm:injection} is injective. Then, for all $r\geq 3$,
\begin{equation*}
    \dim_{\mathbb Q}\left(\im\tr E_{N,r}^{(r-1)}\cap\ker\tr E_{N,r}\right)\geq\dim_{\mathbb Q}\mathbf W_{N,r}\;.
\end{equation*}
\end{cor}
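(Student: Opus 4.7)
The plan is to show that the injection $\Psi\colon \mathbf W_{N,r}\hookrightarrow\ker\tr E_{N,r}$ from Theorem~\ref{thm:injection}, given by $P\mapsto\pi(P)F_{N,r}$, actually takes values in the smaller subspace $\im\tr E_{N,r}^{(r-1)}\cap\ker\tr E_{N,r}$. Once this refinement is in place, the dimension estimate is automatic: the image of $\Psi$ has dimension $\dim_{\mathbb Q}\mathbf W_{N,r}$ by the injectivity assumption, and it is contained in the intersection in question.

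The inclusion $\im\Psi\subseteq\ker\tr E_{N,r}$ is precisely Theorem~\ref{thm:injection}, so the only thing left to verify is $\im\Psi\subseteq\im\tr E_{N,r}^{(r-1)}$. For this I would simply quote Lemma~\ref{lem:fnr}, which supplies the key identity
\[
\pi(P)F_{N,r}=\pi(-P)E_{N,r}^{(r-1)}\qquad\text{for every }P\in\mathbf W_{N,r}.
\]
This identity exhibits $\pi(P)F_{N,r}$ explicitly as a row vector of the form $w\cdot E_{N,r}^{(r-1)}$ with $w=\pi(-P)\in\Vect_{N,r}$, i.e.\ as an element of the row span of $E_{N,r}^{(r-1)}$. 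Under the identification of row and column vectors used throughout the paper, this row span is $\im\tr E_{N,r}^{(r-1)}$, so $\im\Psi\subseteq\im\tr E_{N,r}^{(r-1)}$ as desired.

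There is no genuine obstacle here, since Theorem~\ref{thm:injection} and Lemma~\ref{lem:fnr} were set up precisely for this combination; the corollary is essentially a bookkeeping step that glues the two together. The only substantive ingredient is the injectivity hypothesis, which is assumed in the statement and is in fact the main unresolved issue of the whole framework (compare Remark~\ref{rem:isor2}). Consequently, I expect the write-up to be one short paragraph invoking the two preceding results.
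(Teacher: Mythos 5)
Your proposal is correct and is exactly the argument the paper intends: the paper just says the corollary is ``immediate from Lemma~\ref{lem:fnr},'' and you have spelled out why, namely that the identity $\pi(P)F_{N,r}=\pi(-P)E_{N,r}^{(r-1)}$ places the image of the injection from Theorem~\ref{thm:injection} inside $\im\tr E_{N,r}^{(r-1)}\cap\ker\tr E_{N,r}$, whence the dimension bound.
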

\begin{proof}
This is immediate by the previous Lemma~\ref{lem:fnr}.
\end{proof}
\begin{lem}\label{lem:imphi}
For all $r\geq3$,
\begin{equation*}
    \im\left(\phi{r-1}\circ\phii{r-2}{N,r}\circ\cdots\circ\phii{2}{N,r}\right)\subseteq\ker \phi{r}\cap\im\left(\phi{r-1}\circ\cdots\circ\phi{2}\right)\;.
\end{equation*}
\end{lem}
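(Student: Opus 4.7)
The plan is to split the asserted inclusion into its two components, containment in $\im(\phi{r-1}\circ\cdots\circ\phi{2})$ on the one hand and containment in $\ker\phi{r}$ on the other, and to handle each one separately.

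The first containment is essentially tautological. By definition $\phii j{N,r}$ is just the restriction of $\phi j$ to $\mathbf W_{N,r}\subseteq\mathbf V_{N,r}$, so for every $P\in\mathbf W_{N,r}$ the element $\phi{r-1}\bigl(\phii{r-2}{N,r}\circ\cdots\circ\phii{2}{N,r}(P)\bigr)$ coincides with the value of $\phi{r-1}\circ\phi{r-2}\circ\cdots\circ\phi{2}$ at $P$ viewed as an element of $\mathbf V_{N,r}$, and therefore lies in $\im(\phi{r-1}\circ\cdots\circ\phi{2})$.

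The substantive step is the containment in $\ker\phi{r}$. I would fix $P\in\mathbf W_{N,r}$ and set $Q\coloneqq\phii{r-2}{N,r}\circ\cdots\circ\phii{2}{N,r}(P)$. The first key observation is that all indices occurring in this restricted composition lie in the range $\{2,\ldots,r-2\}$, so Corollary~\ref{cor:phijres} applies to each factor and keeps the result inside $\mathbf W_{N,r}$; hence $Q\in\mathbf W_{N,r}$. It then suffices to prove $\phi{r}(\phi{r-1}(Q))=0$ whenever $Q\in\mathbf W_{N,r}$. Translating to the matrix side by Lemma~\ref{lem:phij}, this is equivalent to $\pi(Q)E_{N,r}^{(r-1)}E_{N,r}=0$. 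Now Lemma~\ref{lem:fnr} (whose hypothesis $Q\in\mathbf W_{N,r}$ is now satisfied) rewrites $\pi(Q)E_{N,r}^{(r-1)}=-\pi(Q)F_{N,r}$, and the well-definedness part of Theorem~\ref{thm:injection} places $\pi(Q)F_{N,r}$ inside $\ker\tr E_{N,r}$, i.e.\ $\pi(Q)F_{N,r}\cdot E_{N,r}=0$. Chaining these identities gives
\begin{align*}
\pi(Q)E_{N,r}^{(r-1)}E_{N,r}=-\pi(Q)F_{N,r}E_{N,r}=0,
\end{align*}
as required.

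The only real subtlety I expect is the $\mathbf W_{N,r}$-preservation step: one must be certain that after every restricted operator $\phii j{N,r}$ has been applied the output still lies in $\mathbf W_{N,r}$, for otherwise neither Lemma~\ref{lem:fnr} nor the well-definedness half of Theorem~\ref{thm:injection} is available. This is precisely what Corollary~\ref{cor:phijres} provides, and it is enabled by the range $2\le j\le r-2$ built into the statement. Once that is in place, the remaining argument collapses to the three-term chain of identities above.
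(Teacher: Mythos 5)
Your argument is correct and follows essentially the same route as the paper: both reduce to showing containment in $\ker\phi{r}$, use Corollary~\ref{cor:phijres} to keep the restricted composition inside $\mathbf W_{N,r}$, and then combine Lemma~\ref{lem:fnr} with the well-definedness half of Theorem~\ref{thm:injection} to conclude $\phi{r}\circ\phi{r-1}$ kills $\mathbf W_{N,r}$. You merely spell out the matrix-side chain of identities more explicitly than the paper does.
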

\begin{proof}
We may replace the right-hand side by just $\ker\phi r$. Note that by Corollary~\ref{cor:phijres} the composition of restricted $\phii j{N,r}$ on the left-hand side is well-defined. Moreover, each $Q\in\im\left(\phi{r-1}\circ\phii{r-2}{N,r}\circ\cdots\circ\phii{2}{N,r}\right)$ can be represented as $Q=\phi{r-1}(P)$ for some $P\in\mathbf W_{N,r}$ and thus $Q\in\ker\phi r$ according to Lemma~\ref{lem:fnr} and Theorem~\ref{thm:injection}.
\end{proof}
Similar to Conjecture~\ref{con:isomorphism} we expect a stronger result to be true, which is stated in the following conjecture due to Claire Glanois:
\begin{con}\label{con:imiso}
For all $r\geq 3$,
\begin{equation*}
 \im\left(\phi{r-1}\circ\phii{r-2}{N,r}\circ\cdots\circ\phii{2}{N,r}\right)=\ker \phi r\cap\im\left(\phi{r-1}\circ\cdots\circ\phi{2}\right)\;.
\end{equation*}
\end{con}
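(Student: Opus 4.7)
Because Lemma \ref{lem:imphi} already provides the inclusion $\subseteq$, the task is to prove $\supseteq$. The plan is to combine Conjecture \ref{con:isomorphism}, the polynomial identity of Lemma \ref{lem:fnr}, and an induction on $r$. For the base case $r=3$, the restricted composition $\phii{r-2}{N,r}\circ\cdots\circ\phii{2}{N,r}$ is empty, so the left-hand side reduces to $\phi 2(\mathbf W_{N,3})$. Given $Q\in\ker\phi 3\cap\im\phi 2$, Conjecture \ref{con:isomorphism} supplies a unique $P\in\mathbf W_{N,3}$ with $\pi(P)F_{N,3}=\pi(Q)$, and Lemma \ref{lem:fnr} combined with Lemma \ref{lem:phij} rewrites this as $\phi 2(-P)=Q$. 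Since $-P\in\mathbf W_{N,3}$, this yields $Q\in\phi 2(\mathbf W_{N,3})$ as desired; in fact the argument shows $\ker\phi 3\subseteq\phi 2(\mathbf W_{N,3})$, so the hypothesis $Q\in\im\phi 2$ is automatic in this case.

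For the induction step, assume the conjecture holds for $r-1$ and take $Q\in\ker\phi r\cap\im(\phi{r-1}\circ\cdots\circ\phi 2)$. Applying Conjecture \ref{con:isomorphism} at depth $r$ together with Lemma \ref{lem:fnr} produces a unique $P\in\mathbf W_{N,r}$ with $\phi{r-1}(-P)=Q$, so it remains to show that this $P$ lies in $\phii{r-2}{N,r}\circ\cdots\circ\phii{2}{N,r}(\mathbf W_{N,r})$. The strategy is a descent: starting from $P_{r-2}\coloneqq -P$, I would inductively construct $P_{r-3},\ldots,P_1\in\mathbf W_{N,r}$ with $P_{j-1}=\phii j{N,r}(P_j)$, using the extra factorization of $Q$ through $\phi j$ on the unrestricted spaces, the kernel formula of Lemma \ref{lem:kerphi} and Corollary \ref{cor:phijres}, and the induction hypothesis applied block by block through Corollary \ref{cor:blockdia}.

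The main obstacle is precisely this descent: one must synchronize the fibrewise lifts produced at each stage so that the resulting polynomial in $r$ variables preserves the period-polynomial symmetry in $(x_1,x_2)$ defining $\mathbf W_{N,r}$. The block-diagonal decomposition of $E_{N,r}^{(2)}\cdots E_{N,r}^{(r-1)}$ with respect to the $x_1$-exponent organizes the image side naturally, but the symmetry defining $\mathbf W_{N,r}$ couples $x_1$ with $x_2$, so lifts chosen block by block do not automatically glue into elements of $\mathbf W_{N,r}$. Overcoming this compatibility problem appears to require either a refinement of Conjecture \ref{con:isomorphism} adapted to the intermediate images $\phii{j}{N,r}\circ\cdots\circ\phii{2}{N,r}(\mathbf W_{N,r})$, or a structural identity for the Ihara action not available in the present toolbox. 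This lifting issue is what makes Conjecture \ref{con:imiso} a substantive new statement rather than a formal consequence of Conjecture \ref{con:isomorphism}.
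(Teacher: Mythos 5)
You are attempting to prove something the paper leaves open: Conjecture~\ref{con:imiso} is a conjecture, attributed to Claire Glanois, and the remark immediately following it says that the authors ``haven't been able to derive Conjecture~\ref{con:imiso} from Conjecture~\ref{con:isomorphism}, so it is not necessarily weaker.'' There is therefore no proof in the paper to compare against. Read closely, your own proposal is not a proof either but a diagnosis, and the diagnosis is accurate. Given $Q\in\ker\phi r\cap\im\left(\phi{r-1}\circ\cdots\circ\phi 2\right)$, the surjectivity half of Conjecture~\ref{con:isomorphism} together with Lemma~\ref{lem:fnr} and Lemma~\ref{lem:phij} supplies a unique $P\in\mathbf W_{N,r}$ with $\phi{r-1}(-P)=Q$; what remains is to show $-P\in\im\left(\phii{r-2}{N,r}\circ\cdots\circ\phii2{N,r}\right)$. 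The factorization hypothesis on $Q$ gives some $\tilde P\in\im\left(\phi{r-2}\circ\cdots\circ\phi 2\right)$ with $\phi{r-1}(\tilde P)=\phi{r-1}(-P)$, but $\phi{r-1}$ need not be injective on all of $\mathbf V_{N,r}$ and $\tilde P$ need not lie in $\mathbf W_{N,r}$, so one cannot cancel $\phi{r-1}$ and identify $\tilde P$ with $-P$. The incompatibility you point to between the $(x_1,x_2)$-symmetry defining $\mathbf W_{N,r}$ and the block-by-block lifts along $\phi j$ (which act in $x_{r-j+1},\ldots,x_r$) is precisely what the authors' caveat alludes to, and this descent is where a genuinely new idea is needed.

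One thing worth recording, which the paper does not: your base case is a genuine, if small, result. For $r=3$ the left-hand side is $\phi[3]2(\mathbf W_{N,3})$, and your chain of reasoning via Conjecture~\ref{con:isomorphism} (surjectivity at $r=3$), Lemma~\ref{lem:fnr} and Lemma~\ref{lem:phij} actually shows $\ker\phi[3]3\subseteq\phi[3]2(\mathbf W_{N,3})$, so that the intersection with $\im\phi[3]2$ plays no role. Hence Conjecture~\ref{con:imiso} at $r=3$ does follow from Conjecture~\ref{con:isomorphism} at $r=3$, and the paper's ``not necessarily weaker'' remark can only bite for $r\geq 4$.
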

\begin{rem}
Note that intersecting $\ker\phi r\cap\im\left(\phi{r-1}\circ\cdots\circ\phi{2}\right)$, Conjecture~\ref{con:imiso} does not need the injectivity from Conjecture~\ref{con:isomorphism}. However, we haven't been able to derive Conjecture~\ref{con:imiso} from Conjecture~\ref{con:isomorphism}, so it is not necessarily weaker.
\end{rem}

\section{Main Results}\label{sec:main results}
Throughout this section we will assume that the map from Theorem~\ref{thm:injection} is injective, i.e. the injectivity part of Conjecture~\ref{con:isomorphism} is true. This was also the precondition for Tasaka's original proof of Theorem~\ref{thm:case4}.

\subsection{Proof of Theorem~\ref{thm:case3}, case $r=3$.} 
By Corollary~\ref{cor:enrjocnr}, Remark~\ref{rem:isor2} and the fact that $E_{N,2}=C_{N,2}$ we obtain 
\begin{align}\label{eq:case3ineq1}
    \sum_{N>0}\dim_{\mathbb Q}\ker E_{N,3}^{(2)}\cdot x^N=\mathbb O(x)\sum_{N>0}\dim_{\mathbb Q}\ker C_{N,2}\cdot x^N=\mathbb O(x)\mathbb S(x)\;.
\end{align}
We use Corollary~\ref{cor:kerimineq} and Lemma~\ref{lem:wnreq} to obtain
\begin{align}\label{eq:case3ineq2}
    \sum_{N>0}\dim_{\mathbb Q}\left(\im\tr E_{N,3}^{(2)} \cap \ker\tr E_{N,3}\right)\cdot x^N \geq \sum_{N>0}\dim_{\mathbb Q}\mathbf W_{N,3}\cdot x^N
    = \mathbb O(x) \mathbb S(x)\;.
\end{align}
Now observe that since $C_{N,3}=E_{N,3}^{(2)}E_{N,3}$, we have
\begin{equation*}
    \dim_{\mathbb Q}\ker C_{N,3}=\dim_{\mathbb Q}\ker\tr E_{N,3}^{(2)}+\dim_{\mathbb Q}\left(\im\tr E_{N,3}^{(2)} \cap \ker\tr E_{N,3}\right)\;.
\end{equation*}
By \eqref{eq:case3ineq1} and \eqref{eq:case3ineq2}, the assertion is proven.\qed

\subsection{Proof of Theorem~\ref{thm:case4}, case $r=4$.}Since $C_{N,4}=E_{N,4}^{(2)}E_{N,4}^{(3)}E_{N,4}$, we may split $\dim_{\mathbb Q}\ker C_{N,4}$ into
\begin{equation*}
\dim_{\mathbb Q}\ker C_{N,4}=\dim_{\mathbb Q}\ker\tr{\left( E_{N,4}^{(2)}E_{N,4}^{(3)}\right)}+\dim_{\mathbb Q}\left(\im\tr {\left( E_{N,4}^{(2)}E_{N,4}^{(3)}\right)}
\cap \ker\tr E_{N,4}\right)\;.
\end{equation*}
The two summands on the right-hand side are treated separately. For the first one, by Corollary~\ref{cor:enrjocnr} and Theorem~\ref{thm:case3} one has
\begin{align}\label{eq:case4ineq1}
    \sum_{N>0}\dim_{\mathbb Q}\ker\tr{\left( E_{N,4}^{(2)}E_{N,4}^{(3)}\right)}\cdot x^N\geq2\mathbb O(x)^2\mathbb S(x)\;.
\end{align}
For the second one, we use Corollary~\ref{cor:phiEiso} and Lemma~\ref{lem:imphi} to obtain
\begin{align*}
    \dim_{\mathbb Q}\left(\im\tr{\left( E_{N,4}^{(2)}E_{N,4}^{(3)}\right)} \cap \ker\tr E_{N,4}\right)&\geq\dim_{\mathbb Q}\im\left(\phi[4]3\circ\phii[4]2{N,4}\right)\\
    &=\dim_{\mathbb Q}\im\phii[4]2{N,4}\;,
\end{align*}
since we assume $\phi[4]3$ to be injective on $\mathbf W_{N,r}$ according to Conjecture~\ref{con:isomorphism}. According to Corollary~\ref{cor:phijres} and Theorem~\ref{thm:Schneps},
\begin{equation*}
    \ker\phii[4]2{N,4}\cong\bigoplus_{n<N}\mathbf W_{N-n,2}\otimes\ker E_{n,2}\cong\bigoplus_{n<N}\mathbf W_{N-n,2}\otimes\mathbf W_{n,2}\;.
\end{equation*}
Now, by $\dim_{\mathbb Q}\im\phii[4]2{N,4}=\dim_{\mathbb Q}\mathbf W_{N,4}-\dim_{\mathbb Q}\ker\phii[4]2{N,4}$ we obtain
\begin{align}\label{eq:case4ineq2}
    \sum_{N>0}\dim_{\mathbb Q}\im\phii[4]2{N,4}\cdot x^N=\mathbb O(x)^2\mathbb S(x)-\mathbb S(x)^2\;.
\end{align}
Combining \eqref{eq:case4ineq1} and \eqref{eq:case4ineq2}, the proof is finished.\qed

\subsection{Proof of Theorem~\ref{thm:case5}.}
In addition to the injectivity of \eqref{eq:TasakasFail}, we now assume Conjecture~\ref{con:isomorphism} is true in the case $r=3$, i.e. 
\begin{align}
     \sum_{N>0}\dim_{\mathbb Q}\ker\tr E_{N,3}\cdot x^N=\mathbb O(x)\mathbb S(x)\label{eq:case5eq1}
\end{align}
by Corollary~\ref{cor:enrineq}. Our goal is to prove the lower bound
\begin{align}
     \sum_{N>0}\dim_{\mathbb Q}\ker C_{N,5}\cdot x^N\geq 4 \mathbb O(x)^3\mathbb S(x)-3 \mathbb O(x)\mathbb S(x)^2\;,\label{eq:case5}
\end{align}
which as an equality would be the exact value predicted by Conjecture~\ref{con:Brown}.
Again we use the decomposition $C_{N,5}=E_{N,5}^{(2)}E_{N,5}^{(3)}E_{N,5}^{(4)}E_{N,5}$ to split $\dim_{\mathbb Q}\ker C_{N,5}$ into
\begin{equation*}
\dim_{\mathbb Q}\ker C_{N,5}=\dim_{\mathbb Q}\ker\tr{\left( E_{N,5}^{(2)}E_{N,5}^{(3)}E_{N,5}^{(4)}\right)}+\dim_{\mathbb Q}\left(\im\tr{ \left( E_{N,5}^{(2)}E_{N,5}^{(3)}E_{N,5}^{(4)}\right)} \cap \ker\tr E_{N,5}\right)\;.
\end{equation*}
Applying Corollary~\ref{cor:enrjocnr} and Theorem~\ref{thm:case4} to the first summand on the right-hand side, we obtain
\begin{align}\label{eq:case5ineq1}
    \sum_{N>0}\dim_{\mathbb Q}\ker E_{N,5}^{(2)}E_{N,5}^{(3)}E_{N,5}^{(4)}\cdot x^N\geq3\mathbb O(x)^3\mathbb S(x)-\mathbb O(x)\mathbb S(x)^2\;.
\end{align}
Again, for the second summand Corollary~\ref{cor:phiEiso} and Lemma~\ref{lem:imphi} yield
\begin{align*}
\begin{split}
    \dim_{\mathbb Q}\left(\im\tr{\left( E_{N,5}^{(2)}E_{N,5}^{(3)}E_{N,5}^{(4)}\right)} \cap \ker\tr E_{N,5}\right)&\geq\dim_{\mathbb Q}\im\left(\phi[5]4\circ\phii[5]3{N,5}\circ\phii[5]2{N,5}\right)\\
    &=\dim_{\mathbb Q}\im\left(\phii[5]3{N,5}\circ\phii[5]2{N,5}\right)\;,
\end{split}
\end{align*}
since $\phi[5]4$ is injective on $\mathbf W_{N,r}$ by our assumption. According to Corollary~\ref{cor:phijres} and Theorem~\ref{thm:Schneps},
\begin{equation*}
    \ker\phii[5]2{N,5}\cong\bigoplus_{n<N}\mathbf W_{N-n,3}\otimes\ker E_{n,2}\cong\bigoplus_{n<N}\mathbf W_{N-n,3}\otimes\mathbf W_{n,2}
\end{equation*}
and by \eqref{eq:case5eq1},
\begin{equation*}
    \ker\phii[5]3{N,5}\cong\bigoplus_{n<N}\mathbf W_{N-n,2}\otimes\ker E_{n,3}\cong\bigoplus_{n<N}\mathbf W_{N-n,2}\otimes\mathbf W_{n,3}\;.
\end{equation*}
Now, by
\begin{equation*}
\dim_{\mathbb Q}\im\left(\phii[5]3{N,5}\circ\phii[5]2{N,5}\right)\geq\dim_{\mathbb Q}\mathbf W_{N,5}-\dim_{\mathbb Q}\ker\phii[5]2{N,5}-\dim_{\mathbb Q}\ker\phii[5]3{N,5}
\end{equation*}
we arrive at
\begin{align}\label{eq:case5ineq2}
    \sum_{N>0}\dim_{\mathbb Q}\im\left(\phii[5]3{N,5}\circ\phii[5]2{N,5}\right)\cdot x^N\geq\mathbb O(x)^3\mathbb S(x)-2\mathbb O(x)\mathbb S(x)^2\;.
\end{align}
Combining \eqref{eq:case5ineq1} and \eqref{eq:case5ineq2} yields the desired result.\qed

\subsection{A recursive approach to the general case $r\geq2$}
In this section, we show that one can recursively derive the exact value of $\dim_{\mathbb Q}\ker C_{N,r}$ from Conjecture~\ref{con:imiso}. Let us fix some notations:
\begin{dfn}
For $r\geq2$, let us define the formal series
\begin{align}
    B_r(x)&\coloneqq \sum_{N>0}\dim_{\mathbb Q}\im\left(\phi{r-1}\circ\phii{r-2}{N,r}\circ\cdots\circ\phii2{N,r}\right)\cdot x^N\tag i\\
    T_r(x)&\coloneqq \sum_{N>0}\dim_{\mathbb Q}\ker C_{N,r}\cdot x^N\tag{ii}\;.
\end{align}
We set $T_0(x),T_1(x)\coloneqq 0$.
\end{dfn}

The main observation is the following lemma:
\begin{lem}\label{lem:recursion}
Assume that Conjecture~\ref{con:imiso} is true and that the map from Theorem~\ref{thm:injection} is injective. Then, for $r\geq 3$ the following recursion holds:
\begin{equation*}
B_r(x)=\mathbb O(x)^{r-2}\mathbb S(x)-\sum_{j=2}^{r-2}\mathbb O(x)^{r-j-2}\mathbb S(x)B_j(x)\;.
\end{equation*}
\end{lem}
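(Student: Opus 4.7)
The plan is to express $B_r(N)$ as $\dim_{\mathbb Q}\mathbf W_{N,r}-\dim_{\mathbb Q}\ker\Phi$ for the composition $\Phi\coloneqq\phii{r-2}{N,r}\circ\cdots\circ\phii{2}{N,r}$ (well-defined by Corollary~\ref{cor:phijres}), then to compute $\dim_{\mathbb Q}\ker\Phi$ via a tensor decomposition of $\mathbf W_{N,r}$, and finally to expand $T_{r-2}(x)$ in terms of $B_2(x),\ldots,B_{r-2}(x)$ using Conjecture~\ref{con:imiso}.

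For the first step, Lemma~\ref{lem:fnr} together with the assumed injectivity of the map in Theorem~\ref{thm:injection} shows that $\phi{r-1}$ is injective on $\mathbf W_{N,r}$. Hence
\[
B_r(N)=\dim_{\mathbb Q}\im\Phi=\dim_{\mathbb Q}\mathbf W_{N,r}-\dim_{\mathbb Q}\ker\Phi\;,
\]
with the first summand contributing $\mathbb O(x)^{r-2}\mathbb S(x)$ to the generating function by Lemma~\ref{lem:wnreq}. For the second step, I would use the decomposition~\eqref{eq:decomposition}, $\mathbf W_{N,r}=\bigoplus_n\mathbf W_{n,2}\otimes\mathbf V_{N-n,r-2}$. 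Since Definition~\ref{def:phij} shows that $\phi j$ with $2\le j\le r-2$ only involves the variables $x_{r-j+1},\ldots,x_r\subseteq\{x_3,\ldots,x_r\}$, the map $\phii{j}{N,r}$ respects this decomposition and restricts on each summand to $\id_{\mathbf W_{n,2}}\otimes\phi[r-2]{j}$, the inner operator being Definition~\ref{def:phij} read in depth $r-2$. Lemma~\ref{lem:phij} applied in depth $r-2$ then identifies the composition $\phi[r-2]{r-2}\circ\cdots\circ\phi[r-2]{2}$ on $\mathbf V_{N-n,r-2}$ with right multiplication by $E_{N-n,r-2}^{(2)}\cdots E_{N-n,r-2}^{(r-2)}=C_{N-n,r-2}$, so that $\ker\Phi\cong\bigoplus_n\mathbf W_{n,2}\otimes\ker C_{N-n,r-2}$ with generating function $\mathbb S(x)T_{r-2}(x)$.

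For the third step, the elementary splitting
\[
\dim_{\mathbb Q}\ker C_{N,k}=\dim_{\mathbb Q}\ker\tr{(E_{N,k}^{(2)}\cdots E_{N,k}^{(k-1)})}+\dim_{\mathbb Q}\bigl(\im\tr{(E_{N,k}^{(2)}\cdots E_{N,k}^{(k-1)})}\cap\ker\tr E_{N,k}\bigr)
\]
combined with Corollary~\ref{cor:enrjocnr} for the first summand, and with Corollary~\ref{cor:phiEiso} plus Conjecture~\ref{con:imiso} for the second, yields $T_k(x)=\mathbb O(x)T_{k-1}(x)+B_k(x)$ for $k\ge 3$; the base case $T_2(x)=\mathbb S(x)=B_2(x)$ is immediate from Theorem~\ref{thm:Schneps}, and $T_0=T_1=0$ by definition. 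Iterating produces $T_{r-2}(x)=\sum_{j=2}^{r-2}\mathbb O(x)^{r-2-j}B_j(x)$, and substituting into $B_r(x)=\mathbb O(x)^{r-2}\mathbb S(x)-\mathbb S(x)T_{r-2}(x)$ gives the claimed recursion.

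The most delicate point is the tensor-product identification in the second step: one must verify carefully that the ``restricted totally even part'' operation from Definition~\ref{def:phij} factors across the tensor decomposition (this uses that elements of $\mathbf W_{n,2}\subseteq\mathbf V_{n,2}$ already have even exponents $\ge 2$ in $x_1,x_2$, so the outer restriction only imposes conditions on the $x_3,\ldots,x_r$ factor) and that the resulting inner composition is exactly $C_{N-n,r-2}$ rather than some permuted variant. Everything else reduces to bookkeeping with the identity $\dim_{\mathbb Q}\ker\tr{(AB)}=\dim_{\mathbb Q}\ker\tr A+\dim_{\mathbb Q}(\im\tr A\cap\ker\tr B)$ for square matrices and applications of the two assumed conjectures.
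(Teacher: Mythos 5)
Your proof is correct, but it takes a genuinely different route from the paper's. The paper telescopes $\dim_{\mathbb Q}\im\left(\phi{r-1}\circ\phii{r-2}{N,r}\circ\cdots\circ\phii2{N,r}\right)$ through the chain of maps, using Corollary~\ref{cor:phijres} and Conjecture~\ref{con:imiso} in depth $j$ to rewrite each intermediate quantity $\dim_{\mathbb Q}\left(\ker\phii j{N,r}\cap\im\left(\phii{j-1}{N,r}\circ\cdots\circ\phii2{N,r}\right)\right)$ as $\sum_{n<N}\dim_{\mathbb Q}\mathbf W_{N-n,r-j}\cdot a_{n,j}$, arriving at the recursion~\eqref{eq:casercauchy} directly in terms of the $a_{n,j}$. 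You instead compute $\dim_{\mathbb Q}\ker\Phi$ (with $\Phi=\phii{r-2}{N,r}\circ\cdots\circ\phii2{N,r}$) in one stroke via the tensor decomposition~\eqref{eq:decomposition}, recognising the inner composition on the $\mathbf V_{N-n,r-2}$ factor as right multiplication by $C_{N-n,r-2}$, and thereby obtaining the clean intermediate identity $B_r(x)=\mathbb O(x)^{r-2}\mathbb S(x)-\mathbb S(x)T_{r-2}(x)$. Notably, this step uses only the injectivity assumption (through Lemma~\ref{lem:fnr}) and not Conjecture~\ref{con:imiso}; the latter enters only when you unfold $T_{r-2}(x)=\sum_{j=2}^{r-2}\mathbb O(x)^{r-2-j}B_j(x)$ via $T_k(x)=\mathbb O(x)T_{k-1}(x)+B_k(x)$, which is precisely the first half of the proof of Theorem~\ref{thm:recursion}. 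Both arguments rely on the same inputs in the end, but yours has the advantage of cleanly isolating where each hypothesis is used and of producing the $T$-recursion of Theorem~\ref{thm:recursion} essentially for free, the conversion back to the stated $B_j$-form of Lemma~\ref{lem:recursion} being a small detour.
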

\begin{proof}
We have
\begin{multline*}
\dim_{\mathbb Q}\im\left(\phi{r-1}\circ\phii{r-2}{N,r}\circ\cdots\circ\phii2{N,r}\right)\\
=\begin{aligned}[t]
\dim_{\mathbb Q}\mathbf W_{N,r}&-
\sum_{j=2}^{r-2}\dim_{\mathbb Q}\ker\phii j{N,r}\cap\im\left(\phii{j-1}{N,r}\circ\cdots\circ\phii2{N,r}\right)\\
&-\dim_{\mathbb Q}\ker\phi{r-1}\cap\im\left(\phii{r-2}{N,r}\circ\cdots\circ\phii2{N,r}\right)\;.
\end{aligned}
\end{multline*}
Since we assume $\phi {r-1}$ to be injective on $\mathbf W_{N,r}$, the last summand on the right-hand side vanishes. Let $2\leq j\leq r-2$. As the restriction to $\mathbf W_{N,r}$ only affects $x_1$ and $x_2$, whereas $\phi j$ acts on $x_{r-j+1},\ldots,x_r$, we obtain
\begin{multline*}
\ker\phii j{N,r}\cap\im\left(\phii{j-1}{N,r}\circ\cdots\circ\phii2{N,r}\right)\\
\begin{aligned}[t]
&=\bigoplus_{n<N}\mathbf W_{N-n,r-j}\otimes\left(\ker\phi[j]j\cap\im\left(\phi[j]{j-1}\circ\cdots\circ\phi[j]2\right)\right)\\
&=\bigoplus_{n<N}\mathbf W_{N-n,r-j}\otimes\im\left(\phi[j]{j-1}\circ\phii[j]{j-2}{n,j}\circ\cdots\circ\phii[j]2{n,j}\right)\;,
\end{aligned}
\end{multline*}
where the last equality follows from Conjecture~\ref{con:imiso}. Hence, if we denote 
\begin{equation*}
a_{N,r}=\dim_{\mathbb Q}\im\left(\phi{r-1}\circ\phii{r-2}{N,r}\circ\cdots\circ\phii2{N,r}\right)\;,
\end{equation*}we obtain the recursion 
\begin{align}\label{eq:casercauchy}
a_{N,r}=\dim_{\mathbb Q}\mathbf W_{N,r}-\sum_{j=2}^{r-2}\sum_{n<N}\dim_{\mathbb Q}\mathbf W_{N-n,r-j}\cdot a_{n,j}\;.
\end{align}
By Lemma~\ref{lem:wnreq} and the convolution formula for multiplying formal series, equation~\eqref{eq:casercauchy} establishes the claim.
\end{proof}

\begin{thm}\label{thm:recursion}
Upon Conjecture~\ref{con:imiso} and the injectivity of \eqref{eq:TasakasFail}, for all $r\geq3$ the following recursion is satisfied:
\begin{equation*}
T_r(x)=\mathbb O(x)T_{r-1}(x)-\mathbb S(x)T_{r-2}(x)+\mathbb O(x)^{r-2}\mathbb S(x)\;.
\end{equation*}
\end{thm}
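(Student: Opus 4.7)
The plan is to first derive an intermediate recursion relating $T_r$ to $T_{r-1}$ and $B_r$, and then to eliminate $B_r$ by plugging into Lemma~\ref{lem:recursion} and telescoping in the formal-series variable $x$.

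Concretely, I would write $C_{N,r} = M_{N,r}\cdot E_{N,r}$ with $M_{N,r}\coloneqq E_{N,r}^{(2)}\cdots E_{N,r}^{(r-1)}$, just as in the proofs of Theorems~\ref{thm:case3} and~\ref{thm:case4}. The standard dimension identity for a product of square matrices gives
\begin{align*}
\dim_{\mathbb{Q}} \ker C_{N,r} = \dim_{\mathbb{Q}} \ker \tr M_{N,r} + \dim_{\mathbb{Q}}\bigl(\im \tr M_{N,r} \cap \ker \tr E_{N,r}\bigr).
\end{align*}
By Corollary~\ref{cor:enrjocnr}, together with $\dim\ker A = \dim\ker\tr A$ for square $A$, the first summand has generating series $\mathbb{O}(x)\, T_{r-1}(x)$. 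By Corollary~\ref{cor:phiEiso} and Conjecture~\ref{con:imiso}, the second summand is identified with the dimension of $\im\bigl(\phi{r-1}\circ\phii{r-2}{N,r}\circ\cdots\circ\phii{2}{N,r}\bigr)$, whose generating series is $B_r(x)$ by definition. This would establish the intermediate recursion
\begin{align}\label{eq:interm}
T_r(x) = \mathbb{O}(x)\, T_{r-1}(x) + B_r(x)\qquad (r\geq 3).
\end{align}

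Finally, I would iterate \eqref{eq:interm} downward from the base case $T_2(x) = \mathbb{S}(x)$ (immediate from $C_{N,2} = E_{N,2}$ and Theorem~\ref{thm:Schneps}) to obtain, by a one-line induction, the convolution formula $T_k(x) = \sum_{j=2}^{k} \mathbb{O}(x)^{k-j} B_j(x)$ for all $k \geq 2$, provided one sets $B_2(x)\coloneqq \mathbb{S}(x)$. Substituting this with $k=r-2$ into Lemma~\ref{lem:recursion} collapses the inner sum into
\begin{align*}
B_r(x) = \mathbb{O}(x)^{r-2}\mathbb{S}(x) - \mathbb{S}(x)\, T_{r-2}(x),
\end{align*}
and plugging this back into \eqref{eq:interm} yields exactly the recursion stated in Theorem~\ref{thm:recursion}. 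The only subtlety I anticipate is the mild boundary check that the convention $B_2(x) = \mathbb{S}(x)$ is consistent with Lemma~\ref{lem:recursion}: one has to interpret the empty composition appearing in the definition of $B_r$ at $r=2$ as the identity on $\mathbf{W}_{N,2}$, which is precisely the convention tacitly used in the proof of Lemma~\ref{lem:recursion} when the index $j$ in $\ker \phii{j}{N,r}\cap\im\bigl(\phii{j-1}{N,r}\circ\cdots\circ\phii{2}{N,r}\bigr)$ equals $2$. After this bookkeeping, everything reduces to a clean formal-series computation.
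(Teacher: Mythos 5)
Your proof is correct and follows essentially the same route as the paper: both derive the intermediate identity $T_r(x)=\mathbb{O}(x)T_{r-1}(x)+B_r(x)$ from Corollaries~\ref{cor:enrjocnr}, \ref{cor:phiEiso} and Conjecture~\ref{con:imiso}, and both then combine it with Lemma~\ref{lem:recursion} to eliminate the $B_j$. The only cosmetic difference is in the final bookkeeping: the paper substitutes $B_j(x)=T_j(x)-\mathbb{O}(x)T_{j-1}(x)$ into the sum and telescopes, whereas you first solve the intermediate recursion to get $T_k(x)=\sum_{j=2}^{k}\mathbb{O}(x)^{k-j}B_j(x)$ and then recognize the inner sum of Lemma~\ref{lem:recursion} as $\mathbb{S}(x)T_{r-2}(x)$ --- two equivalent ways of performing the same algebraic simplification.
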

\begin{proof}
As we assume Conjecture~\ref{con:imiso}, we get from Definition~\ref{def:cnr} and Corollary~\ref{cor:phiEiso}
\begin{align*}
\dim_{\mathbb Q}\ker C_{N,r}&=
\begin{multlined}[t]
\dim_{\mathbb Q}\ker\tr{\left( E_{N,r}^{(2)}\cdots E_{N,r}^{(r-1)}\right)}\\+\dim_{\mathbb Q}\left(\im\tr{\left( E_{N,r}^{(2)}\cdots E_{N,r}^{(r-1)}\right)} \cap \ker\tr E_{N,r}\right)
\end{multlined}\\
&=\begin{multlined}[t]
\dim_{\mathbb Q}\ker\tr{\left( E_{N,r}^{(2)}\cdots E_{N,r}^{(r-1)}\right)}\\+\dim_{\mathbb Q}\im\left(\phi{r-1}\circ\phii{r-2}{N,r}\circ\cdots\circ\phii2{N,r}\right)
\end{multlined}
\end{align*}
and thus, by Corollary~\ref{cor:enrjocnr},
\begin{align}
T_r(x)=\mathbb O(x)T_{r-1}(x)+B_r(x)\;.
\end{align}
Using Lemma~\ref{lem:recursion}, we obtain
\begin{align*}
T_r(x)&=\mathbb O(x)T_{r-1}(x)+\mathbb O(x)^{r-2}\mathbb S(x)-\sum_{j=2}^{r-2}\mathbb O(x)^{r-j-2}\mathbb S(x)\big(T_j(x)-\mathbb O(x)T_{j-1}(x)\big)\\
&=\mathbb O(x)T_{r-1}(x)+\mathbb O(x)^{r-2}\mathbb S(x)-\mathbb S(x)T_{r-2}(x)+\mathbb O(x)^{r-3}\mathbb S(x)T_1(x)\\
&=\mathbb O(x)T_{r-1}(x)-\mathbb S(x)T_{r-2}(x)+\mathbb O(x)^{r-2}\mathbb S(x)\;,
\end{align*}where by definition $T_1(x)=0$. The conclusion follows.
\end{proof}

Note that by our choice of $T_0(x)$ and $T_1(x)$, Theorem~\ref{thm:recursion} remains true for $r=2$ since we know from \cite{schneps} that $T_2(x)=\mathbb S(x)$. Under the assumption of Conjecture~\ref{con:imiso} and injectivity in \eqref{eq:TasakasFail}, we are now ready to prove that the generating series of $\rank C_{N,r}$ equals the explicit series $\frac{1}{1-\mathbb O(x)y+\mathbb S(x)y^2}$ as was claimed in Conjecture~\ref{con:Brown}. This (under the same assumptions though) proves the motivic version of Conjecture~\ref{con:Brown} (i.e. with $\mathcal Z_{N,r}^\odd$ replaced by $\mathcal H_{N,r}^\odd$). 

Let $R_r(x)=\mathbb O(x)^r-T_r(x)$ and note that by Theorem~\ref{thm:recursion}
\begin{equation*}
    R_r(x)=\mathbb O(x)R_{r-1}(x)-\mathbb S(x)R_{r-2}(x)
\end{equation*}for all $r\geq2$. Hence,
\begin{align*}
    \left(1-\mathbb O(x)y+\mathbb S(x)y^2\right)\sum_{r\geq0}R_r(x)y^r&=
    \begin{aligned}[t]
        &\sum_{r\geq2}\big(R_r(x)-\mathbb O(x)R_{r-1}(x)+\mathbb S(x)R_{r-2}(x)\big)y^r\\
        &+R_0(x)+R_1(x)y-\mathbb O(x)R_0(x)y
    \end{aligned}\\
    &=R_0(x)+\mathbb O(x)y-\mathbb O(x)y\\
    &=1
\end{align*}and thus
\begin{equation*}
    1+\sum_{N,r>0}\rank C_{N,r}\cdot x^Ny^r=\sum_{r\geq0}R_r(x)y^r=\frac1{1-\mathbb O(x)y+\mathbb S(x)y^2}\;,
\end{equation*}
which is the desired result (Theorem~\ref{thm:caser}).

\newpage
\printbibliography

@article {tasakaPublished,
    AUTHOR = {Tasaka, Koji},
     TITLE = {On linear relations among totally odd multiple zeta values
              related to period polynomials},
   JOURNAL = {Kyushu J. Math.},
  FJOURNAL = {Kyushu Journal of Mathematics},
    VOLUME = {70},
      YEAR = {2016},
    NUMBER = {1},
     PAGES = {1--28},
       DOI = {10.2206/kyushujm.70.001},
}

@online{Brown,
    author =       "Francis Brown",
    title =        "{Depth-graded motivic multiple zeta values}",
    eprinttype  = {arxiv},
    eprint      = {1301.3053}
}

@online{schneps,
    author =       "Samuel Baumard; Leila Schneps",
    title =        "{Period polynomial relations between double zeta values}",
    eprinttype  = {arxiv},
    eprint      = {1109.3786}
}

@article{zagier,
    author = "Winfried Kohnen; Don Zagier",
    title = "Modular Forms with Rational Periods",
    journal = "Modular forms",
    year = "1984",
}

@inproceedings{gkz2006,
  title={Double zeta values and modular forms},
  author="Herbert Gangl; Masanobu Kaneko; Don Zagier",
  booktitle={Automorphic forms and zeta functions: proceedings of the conference in memory of Tsuneo Arakawa},
  pages={71--106},
  year={2006}
}

@article{brownMixedMotives,
 author = "Francis Brown",
 journal = "Annals of Mathematics",
 number = "2",
 pages = {949 -- 976},
 publisher = {Annals of Mathematics},
 title = {Mixed Tate motives over $\mathbb Z$},
 volume = {175},
 year = {2012}
}

@article {tasakaCorrigendum,
    AUTHOR = {Tasaka, Koji},
     TITLE = {Corrigendum: {O}n linear relations among totally odd multiple
              zeta values related to period polynomials},
   JOURNAL = {Kyushu J. Math.},
  FJOURNAL = {Kyushu Journal of Mathematics},
    VOLUME = {73},
      YEAR = {2019},
    NUMBER = {1},
     PAGES = {219},
       DOI = {10.2206/kyushujm.73.219},
}

@article {tasakaFullProof,
    AUTHOR = {Tasaka, Koji},
     TITLE = {Note on totally odd multiple zeta values},
   JOURNAL = {Math. J. Okayama Univ.},
  FJOURNAL = {Mathematical Journal of Okayama University},
    VOLUME = {64},
      YEAR = {2022},
     PAGES = {63--73},
}

\end{document}